\theoremstyle{plain}
\newtheorem{The}{Theorem}
\newtheorem*{The*}{Theorem}
\newtheorem{Pro}{Proposition}
\newtheorem{Lem}{Lemma}
\newtheorem{Cor}{Corollary}
\newtheorem*{Cor*}{Corollary}
\theoremstyle{definition}
\newtheorem*{Def}{Definition}
\newtheorem{Rem}{Remark}
\newtheorem*{Rem*}{Remark}
\numberwithin{equation}{section}
\newcommand{\dvector}[1]{{\left(\begin{matrix}#1\end{matrix}\right)}}
\DeclareMathOperator{\del}{\partial}
\newcommand{\R}{\mathbb{R}}
\newcommand{\C}{\mathbb{C}}
\renewcommand{\H}{\mathbb{H}}  
\newcommand{\CP}{\mathbb{CP}}
\newcommand{\ii}{\mathbbm i}
\newcommand{\jj}{\mathbbm j}
\begin{document}

\title{Constrained Willmore and CMC Tori in the $3-$sphere}

\author{Lynn Heller}

\address{ Institut f\"ur Mathematik\\  Universit{\"a}t T\"ubingen\\ Auf der Morgenstelle
10\\ 72076 T\"ubingen\\ Germany
 }

\email{lynn-jing.heller@uni-tuebingen.de}


\date{\today}

\thanks{The author is supported by The Sonderforschungsbereich Transregio 71}

\begin{abstract} 
Constrained Willmore surfaces are critical points of the Willmore functional under 
conformal variations. As shown in \cite{B} one can associate to any 
conformally immersed constrained Willmore torus $f$ a compact Riemann surface $
\Sigma,$ such that $f$ can be reconstructed in terms of algebraic data on $\Sigma$. 
Particularly interesting examples of constrained Willmore tori are the tori with constant 
mean curvature (CMC) in a $3-$dimensional space form. It is shown in \cite{Hi} and in 
\cite{PS} that the spectral curves of these tori are hyperelliptic.
In this paper we show under mild conditions that a constrained Willmore torus $f$ in 
$S^3$ is a CMC torus in a $3-$dimensional space form if its spectral curve has the structure of a CMC spectral curve.
 \end{abstract}
\maketitle


\section{Introduction}\label{sec:intro}
The study of constant mean curvature (CMC) tori is classical in differential geometry. Hopf conjectured that the only compact CMC surface in $\R^3$ would be the round sphere. This was proven by Alexandrov \cite{A} in the case of embeddings in 1956. The first counterexample  was found by Wente \cite{W} in 1986 in the class of immersed tori. In fact there exist a great variety of immersed CMC tori in $3-$dimensional space forms. Hitchin \cite{Hi} and Pinkall and Sterling \cite{PS}
found independently that CMC tori in $3-$space form an integrable system, i.e., one can associate to every CMC torus a Riemann surface $\Sigma$ - the spectral curve - and the immersion can be reconstructed in terms of algebraic data on $\Sigma.$ For CMC tori $ \Sigma$ is hyperelliptic. Based on these results Bobenko \cite{Bob1} constructed all CMC tori in $3-$dimensional space forms explicitly  in terms of theta functions.

On the other hand, constrained Willmore surfaces are critical points of the Willmore functional $\int (H^2 +1)dA$  under conformal variations, see \cite{BoPetP} and \cite{KS}. Compact constrained Willmore minimizers can be viewed as optimal  realizations of the underlying Riemann surface in $3-$space. The Willmore functional is invariant under conformal transformations of the ambient space, thus we do not distinguish between M\"obius equivalent surfaces. 
Examples of constrained Willmore tori are CMC tori in a space form. Another related class of constrained Willmore tori were found by Babich and Bobenko \cite{BaBob}. These tori consists of two non compact components, each of them has constant mean curvature in $H^3$. The components are glued together over the infinity boundary of the hyperbolic space.
Its is shown in \cite{B} that  constrained Willmore tori in $S^3$ also form an integrable system. The spectral curve $\Sigma$ is here either a $4-$fold covering or a double covering of $\C P^1.$ 

Two natural questions are whether both definitions of a spectral curve coincide in the case of 
CMC tori and whether the structure of the spectral curve already determines the 
immersion type. The first question is answered in the affirmative by \cite{B}.
 The constrained Willmore spectral curve of a CMC torus corresponds to the harmonic map spectral curve of its  Gau\ss  $ $ map. The second assertion is not true in general. Counter examples are given by a simple factor dressing of certain homogenous tori, viewed in the constrained Willmore setup. Simple factor dressing preserves  
the spectral curve but the resulting torus is in general not CMC.
We rule out this case  by demanding the
uniqueness of the reconstruction of the surface up to M\"obius transformations from the spectral data. 

In this paper we first recall the definition of the spectral curve $\Sigma$ for CMC tori due to\cite{Bob2} and \cite{Hi}. 
This spectral curve  is hyperelliptic and its genus $g$ is the geometric genus of the immersion. For the reconstruction it is more convenient to consider a (possibly singular) curve $\tilde \Sigma$ for which $\Sigma$ is 
the normalization. The  genus $p$  of $\tilde \Sigma$ is the arithmetic genus of the immersion, see \cite{Hi}. The reconstruction of the immersion is then given by the Sym-Bobenko formula \cite {Bob2}. 

Then we briefly discuss the definition of the spectral curve for general constrained Willmore tori introduced in \cite{B}. The so defined spectral curve is shown in \cite{B} to coincide with the one defined in \cite{BLPP} for conformally immersed tori and is by definition a smooth curve. Thus we can use Theorem 4.2 of \cite{BLPP} for the reconstruction of a constrained Willmore immersion. We call a immersion simple if this reconstruction is unique up to M\"obius  transformations.

For CMC tori simple means that the arithmetic spectral genus equals the geometric spectral genus of the immersion. The main theorem of the paper is the following. 
\begin{The}
A simple constrained Willmore torus in $S^3$ is a CMC torus in a space form if and only if its spectral curve has the structure of a CMC spectral curve.
\end{The}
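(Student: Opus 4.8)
The plan is to prove the two implications of the equivalence separately, treating the direction ``CMC $\Rightarrow$ CMC spectral curve structure'' as a consequence of the identifications already available in the literature, and concentrating the real work on the converse.

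\emph{CMC implies CMC spectral structure.} If $f$ is CMC in a space form, its harmonic Gau\ss{} map gives, by \cite{Hi} and \cite{PS}, a hyperelliptic spectral curve carrying the full CMC structure: the real structure, the two marked (Sym) points interchanged by the hyperelliptic involution, and the prescribed asymptotics. By the identification of \cite{B}, the constrained Willmore spectral curve of $f$ coincides with this harmonic map spectral curve. Since $f$ is simple, its arithmetic and geometric spectral genera agree, so the singular model $\tilde\Sigma$ equals its normalisation $\Sigma$; the constrained Willmore spectral curve is therefore smooth and carries exactly a CMC spectral curve structure. This direction only collects the cited results.

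\emph{CMC spectral structure implies CMC.} For the converse I would run the CMC reconstruction in reverse. Starting from the spectral curve $\Sigma$ of $f$ together with its hypothesised CMC structure, I would feed the hyperelliptic data, its involutions and its marked points into the Sym--Bobenko formula of \cite{Bob2} to produce a CMC immersion $\tilde f$ of a torus into a space form. By the direction just established, the constrained Willmore spectral curve of $\tilde f$ is again $\Sigma$, with the same CMC structure and marked points. Hence $f$ and $\tilde f$ are two constrained Willmore tori in $S^3$ with identical spectral data. Because $f$ is simple, Theorem~4.2 of \cite{BLPP} reconstructs the immersion from this data uniquely up to M\"obius transformations, so $f$ and $\tilde f$ differ by a M\"obius transformation of $S^3$. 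As the class of CMC tori in the space forms $\R^3$, $S^3$ and $H^3$ is preserved under M\"obius transformations of the conformal $3$-sphere, $f$ is itself CMC in a space form.

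I expect the main obstacle to lie in matching the two reconstruction schemes: the Sym--Bobenko procedure recovers the surface from the harmonic Gau\ss{} map spectral data, whereas Theorem~4.2 of \cite{BLPP} reconstructs from the constrained Willmore spectral data, and one must check that the CMC structure imposed on $\Sigma$ is precisely what forces these two outputs to coincide after translation through the identification of \cite{B}. This is exactly where simplicity is indispensable: without it the simple factor dressings mentioned in the introduction would supply non-CMC tori sharing the CMC spectral curve of a homogeneous torus, and the uniqueness step would fail. Verifying that the real structure, the marked points and the asymptotic data of a CMC spectral curve pass correctly through these identifications into the hypotheses of \cite{BLPP} is the technical heart of the argument.
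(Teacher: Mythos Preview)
Your overall two-step strategy is sound, but the converse argument has a genuine gap: you treat the spectral curve $\Sigma$ alone as the input to Sym--Bobenko, whereas both reconstruction schemes require the pair $(\Sigma,Z)$, i.e.\ the curve \emph{together with} the $T^2$-family of eigenline bundles. Simplicity says the map $F$ of \cite{BLPP} is unique given $(\Sigma,Z)$, not given $\Sigma$ alone. So producing ``some'' CMC torus $\tilde f$ with spectral curve $\Sigma$ does not let you invoke simplicity: you would have to arrange that $\tilde f$ also has the same line bundle family $Z$ as $f$, and you do not say how.

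The paper closes this gap by working directly with the line bundles coming from $f$. The constrained Willmore eigenlines $\tilde{\mathcal L}_x$ sit in $\Sigma\times\C^4$; the paper projects them to the rank-$2$ quotient $V/L$ to obtain bundles $\mathcal L_x\subset\Sigma\times(V/L)_x$ and then verifies the hypotheses of Hitchin's reconstruction theorem for this family: compatibility with $\sigma$ and $\rho$, non-specialness, and crucially $\deg\mathcal L_x^*=g+1$. This last point is where simplicity is actually used---not merely at the end for uniqueness, but to force $h^0(\mathcal L_x^*)=2$ (via Kodaira embedding into $\C P^1$), which combined with the lower bound $\deg\mathcal L_x^*\ge g+1$ from the symplectic form $\hat\omega$ and Riemann--Roch pins down the degree and non-specialness. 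Only then does Hitchin's theorem yield a CMC family $\nabla^\lambda$ on $V/L$ itself; the Sym points are located as the two preimages of $\mu=1$ in $\Sigma/\sigma$, and the resulting $\tilde f$ has quotient bundle holomorphically isomorphic to $V/L$ by construction, so it is M\"obius equivalent to $f$. Your proposal correctly anticipates that matching the two reconstructions is the heart of the matter, but the concrete mechanism---passing from rank $4$ to rank $2$ via $V/L$ and using simplicity to control the degree of $\mathcal L_x^*$---is missing.
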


A necessary condition is that a certain anti-holomorphic involution on $\Sigma$ has fixpoints. This excludes the tori constructed by Babich and Bobenko  \cite{BaBob} mentioned before and is always satisfied if the spectral genus is even. In the case of spectral genus $g \leq 2$ the following holds: It is well known that simple tori of spectral genus $0$ are always homogenous. We show that a simple immersion $f: T^2 \rightarrow S^3$ of spectral genus $1$ is always equivariant, i.e., it  has a $1-$parameter group of M\"obius symmetries. Thus it is always CMC in a space form by \cite{H1}. If the spectral genus is $2$ we show that simple constrained Willmore tori are either equivariant  or CMC in a space form.

\section{Spectral curve for CMC tori} \label{CMCteil}

This definition of the spectral curve is due to \cite{Hi}  and \cite{Bob2}. We define it only for the case of CMC tori in $S^3.$
Let $f : T^2 \rightarrow S^3$ be a conformally immersed torus with mean curvature $H$ and  
let $\alpha =  f^{-1} df$ be the corresponding Maurer-Cartan form.  We denote by $W$ the pull back of the spinor bundle of $S^3.$ This is a complex rank $2$ vector bundle with a quaternionic structure $j$ and a symplectic form $\hat \omega.$  

Let  $\alpha =\alpha' + \alpha''$ be the splitting of $\alpha$ into its complex linear and complex anti-linear part. 
Then we can associate to $f$ a family of connections
\begin{equation}
\nabla^{\lambda} = \nabla + \tfrac{1}{2}(1+ \lambda^{-1} )(1 + i H) \alpha' + \tfrac{1}{2}(1+ \lambda)(1 - iH)\alpha''), \quad \lambda \in \C_*,
\end{equation}
where $\nabla$ is the trivial connection on $W.$
This family of connections is flat if and only if $H \equiv const$ in $S^3$ and  has the symmetry 
$$\nabla^{\bar \lambda^{-1}} = j^{-1} \nabla^{\lambda} j.$$
Thus for $\lambda \in S^1$ the connection $\nabla^{\lambda}$ is unitary. The following theorem shows how to reconstruct CMC immersions in a $3-$dimensional space forms from this associated family of flat connections. Moreover, it shows that we can restrict ourselves without loss of generality to the associated family for minimal surfaces in $S^3$ given by 
\begin{equation}\label{flat}
\nabla^{\lambda} = \nabla + \tfrac{1}{2}(1+ \lambda^{-1} ) \alpha' + \tfrac{1}{2}(1+ \lambda)\alpha''), \quad \lambda \in \C_*,
\end{equation}
where $\nabla$ is a unitary connection on $W.$
\begin{The}[\cite{Bob2}]\label{sym}
Let $\nabla^{\lambda}$ be a family of flat connections on $W$ given in (\ref{flat}) and $\lambda_0, \lambda_1 \in \C$. Further let $X_\lambda,$ det$X_\lambda = 1,$
be a parallel frame of $\nabla^{\lambda}.$   

For two distinct $\lambda_0, \lambda_1 \in S^1$ the map $f$ given by 
\begin{equation}
f = X_{\lambda_0}^{-1}X_{\lambda_1} 
\end{equation}
is well defined on $\C$ and has constant mean curvature $H = i \frac{\lambda_0 + \lambda_1}{\lambda_0 - \lambda_1}$ in $S^3$ 

If $\lambda_0 = \lambda_1 \in S^1$ then 
\begin{equation}
f = X_{\lambda_0}^{-1} \frac{\del X}{\del \lambda}|_{\lambda_0}
\end{equation}
is a CMC $1$ surface in $\R^3$.

And for $\lambda_1 = \bar \lambda_0^{-1}$ and $|\lambda_0|^2 < 1,$
$f$ given by 
\begin{equation}
f =  X_{\lambda_0}^{-1}X_{\lambda_1} 
\end{equation}

is CMC in $H^3$ with mean curvature $H= \frac{1+ |\lambda_0|^2}{1- |\lambda_0|^2}$.\\
Further, all CMC tori in $3-$space can be obtained this way.
\end{The}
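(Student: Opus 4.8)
The plan is to verify the three reconstruction formulas by a direct Maurer--Cartan computation, letting the ambient space form be determined by where the pair $(\lambda_0,\lambda_1)$ sits with respect to $S^1$ and the involution $\lambda\mapsto\bar\lambda^{-1}$. First I would identify the target. For $\lambda\in S^1$ one has $\bar\lambda^{-1}=\lambda$, so the symmetry $\nabla^{\bar\lambda^{-1}}=j^{-1}\nabla^\lambda j$ reads $\nabla^\lambda=j^{-1}\nabla^\lambda j$; hence $\nabla^\lambda$ preserves the Hermitian metric compatible with $j$ and is unitary, so the parallel frame $X_\lambda$ with $\det X_\lambda=1$ may be chosen in $\SU(2)$. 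Since $\nabla^\lambda$ is flat, $X_\lambda$ exists globally on the universal cover $\C$, and therefore $f=X_{\lambda_0}^{-1}X_{\lambda_1}$ is a well-defined map $\C\to\SU(2)\cong S^3$.

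Next I would compute the pulled-back Maurer--Cartan form. Writing $\nabla^\lambda=d+\omega_\lambda$ in a fixed trivialization, a parallel frame satisfies $(dX_\lambda)X_\lambda^{-1}=-\omega_\lambda$, and a short calculation gives
$$ f^{-1}df = X_{\lambda_1}^{-1}(\omega_{\lambda_0}-\omega_{\lambda_1})X_{\lambda_1} = X_{\lambda_1}^{-1}\!\left[\tfrac12(\lambda_0^{-1}-\lambda_1^{-1})\alpha'+\tfrac12(\lambda_0-\lambda_1)\alpha''\right]\!X_{\lambda_1}, $$
since the $\nabla$- and constant-$\alpha$ terms common to both connections cancel. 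Thus the $(1,0)$- and $(0,1)$-parts of $f^{-1}df$ are the conjugates of $\alpha'$ and $\alpha''$ weighted by $\tfrac12(\lambda_0^{-1}-\lambda_1^{-1})$ and $\tfrac12(\lambda_0-\lambda_1)$. Because $|\lambda_0|=|\lambda_1|=1$ these two weights are complex conjugate, so with the reality condition $\alpha''=-(\alpha')^*$ the form $f^{-1}df$ is $\su(2)$-valued, as it must be. Conformality is then immediate from the isotropy $\Tr((\alpha')^2)=0$ of the $(1,0)$-part (conjugation by $X_{\lambda_1}$ preserves the trace): with the bi-invariant metric $\langle\xi,\eta\rangle=-\tfrac12\Tr(\xi\eta)$ the $(2,0)$-part of the induced metric vanishes, so $f$ is a conformal immersion away from the zeros of $\alpha'$.

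The main work is the mean curvature. Here I would differentiate $f^{-1}df$ and use the zero-curvature equations obtained by expanding the flatness condition for $\omega_\lambda$ in powers of $\lambda$: the $\lambda^{\pm1}$-coefficients express covariant closedness of $\alpha'$ and $\alpha''$, while the $\lambda^0$-coefficient couples them through the curvature of $\nabla$. These identities let me write the second fundamental form of $f$, computed against the unit normal $N=X_{\lambda_1}^{-1}(\,\cdot\,)X_{\lambda_1}$, entirely in terms of $\alpha',\alpha''$ and the scalar $\tfrac{\lambda_0+\lambda_1}{\lambda_0-\lambda_1}$; the pointwise dependence then cancels against the induced metric, leaving the constant $H=i\tfrac{\lambda_0+\lambda_1}{\lambda_0-\lambda_1}$. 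The delicate point is bookkeeping the contribution of the ambient curvature of $S^3$, which is exactly what distinguishes this value from the Euclidean normalization; this is where I expect the real difficulty to lie.

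Finally I would treat the remaining targets and the converse. For $\lambda_0=\lambda_1\in S^1$ the formula is the $\lambda_1\to\lambda_0$ limit: since $f(\lambda_0,\lambda_0)=\Id$, the first-order term $X_{\lambda_0}^{-1}\partial_\lambda X|_{\lambda_0}$ is $\su(2)\cong\R^3$-valued, and the same computation degenerates to a CMC $1$ surface in $\R^3$. For $\lambda_1=\bar\lambda_0^{-1}$ with $|\lambda_0|^2<1$ the symmetry $\nabla^{\bar\lambda^{-1}}=j^{-1}\nabla^\lambda j$ forces $X_{\lambda_1}$ to be the $j$-conjugate of $X_{\lambda_0}$, so that $f=X_{\lambda_0}^{-1}X_{\lambda_1}$ is positive Hermitian of determinant $1$, i.e.\ a point of $H^3=\SL(2,\C)/\SU(2)$, and the analogous curvature computation yields $H=\tfrac{1+|\lambda_0|^2}{1-|\lambda_0|^2}$. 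The assertion that every CMC torus arises this way is the converse of the construction recalled before the theorem: from a CMC immersion one forms the family $\nabla^\lambda$, which is flat precisely because $H$ is constant, and evaluating the Sym formula at the appropriate points recovers the original surface.
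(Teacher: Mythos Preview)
The paper does not give its own proof of this theorem: it is stated as a result of Bobenko and cited from \cite{Bob2}, followed immediately by a remark and no proof environment. So there is nothing in the paper to compare your argument against.

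That said, your outline is the standard Sym--Bobenko computation and is correct in its architecture. The identification of the target via the reality $\nabla^{\bar\lambda^{-1}}=j^{-1}\nabla^\lambda j$, the Maurer--Cartan computation
\[
f^{-1}df \;=\; X_{\lambda_1}^{-1}\Bigl[\tfrac12(\lambda_0^{-1}-\lambda_1^{-1})\alpha'+\tfrac12(\lambda_0-\lambda_1)\alpha''\Bigr]X_{\lambda_1},
\]
and the conformality via isotropy of $\alpha'$ are all right. You are also honest that the mean-curvature bookkeeping is the nontrivial step; in Bobenko's treatment this is done by reading off the Hopf differential and the induced metric from the coefficients of $\alpha'$ and $\alpha''$ and using the structure equations encoded in the $\lambda$-expansion of flatness, exactly as you indicate. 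The $\R^3$ case as a derivative limit and the $H^3$ case via the Iwasawa-type identification $f=X_{\lambda_0}^{-1}(jX_{\lambda_0}j^{-1})$ landing in positive Hermitian matrices are likewise the standard moves. The converse paragraph is the content of the discussion preceding the theorem in the paper: the family $\nabla^\lambda$ built from a CMC immersion is flat, and one checks that evaluating the Sym formula at the original Sym points returns $f$ up to an isometry. So your proposal is a faithful sketch of Bobenko's proof; it simply has no counterpart in this paper to be compared with.
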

\begin{Rem}
The formulas above are called the Sym-Bobenko formulas and the $\lambda_i$ are called Sym points. By choosing $\lambda_0 = - \lambda_1 \in S^1$ we obtain a minimal immersion in $S^3$. 
Thus in order to obtain all CMC tori in $3-$dimensional space forms it is sufficient to write down all possible $1-$parameter families of flat connections of the form (\ref{flat}). The first step is to define a spectral curve.\\
\end{Rem}

Let $T^2= \C /\Gamma$ and let $H^{\lambda}_x(\gamma)$ denote the holonomy of the connection $\nabla^{\lambda}$ along $\gamma \in \Gamma$ with respect to the base point $x\in T^2.$ We choose $\gamma$ to be one of the generators of $\Gamma.$  By \cite{Hi} the holonomy $H^{\lambda}_x(\gamma)$ is generically diagonalizable and has distinct eigenvalues. The spectral curve $\Sigma$ is now the normalization and compactification of the analytic variety 
$$\{(\eta, \lambda) \in \C_* \times \C_* \ | \ f(\eta, \lambda) = 0\} \quad \quad \text{with} \quad \quad f(\eta, \lambda) = \text{det}(H_x^{\lambda}(\gamma) -  \eta Id).$$ 
Since $H_p^{\lambda}(\gamma)$ is  $2\times 2,$  $\Sigma$ is hyperelliptic and  we have a hyperelliptic involution $\sigma$ on $\Sigma.$ Further, an anti-holomorphic involution $\rho$ is given by the quaternionic structure $j.$
By \cite{Hi}
the spectral curve of a minimal immersion is branched over $\lambda = 0$ and $\lambda = \infty$. Therefore we obtain
$$\Sigma: \eta^2 = \lambda \Pi_{i = 1}^g \bar q_i^{-1} (\lambda - q_i)(\lambda - \bar q_1^{-1}),$$
where $q_i \neq  0, \infty,$ and $\bar q_i^{-1}$ are the odd order roots of the function
$f(\eta, \lambda)$ without multiplicity. The change of the base point $x \in T^2$ corresponds to a conjugation of the holonomy matrix. Thus the eigenvalues and hence the spectral curve is independent of $x\in T^2.$ Further, since $\Gamma$ is a lattice,  $\Sigma$ is also independent of the choice of $\gamma \in \Gamma$.  \\

Next we want to define a $T^2-$family of complex holomorphic line bundles over the spectral curve and collect all properties these line bundles have. 
For a fixed $x \in T^2$ let $\mathcal L_x$ denote the line bundle over $\Sigma$ given by the eigenspace of the holonomy. To be more explicit, at a  generic point  $(\eta,\lambda) \in \Sigma,$ where $H^{\lambda}_x(\gamma)$ has $2$ distinct eigenvalues,  we define the   fiber $\mathcal L_x|_{(\eta,\lambda)}$  to be the $1-$dimensional eigenspace of $H^{\lambda}_x(\gamma)$ with respect to the eigenvalue $\eta.$ The bundle is well-defined and extends holomorphically  
to a line bundle over all $\Sigma.$
\begin{Pro}[\cite{Hi}, section 7]
Let $x_0 \in T^2.$ Then map 
$$\Psi: T^2 \rightarrow  Jac(\Sigma), x \mapsto \mathcal L_x L_{x_0}^{-1} $$ is a group homomorphism. 
\end{Pro}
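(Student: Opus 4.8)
The plan is to lift $\Psi$ to the universal cover and show that the lift is $\R$-affine; the homomorphism property (with $x_0$ as neutral element) then follows. Fix a lift $\tilde x_0\in\C$ of $x_0$ and let $F^\lambda(z)$ be the parallel frame of $\nabla^\lambda$ on $\C$ normalized by $F^\lambda(\tilde x_0)=\Id$. For $p=(\eta,\lambda)\in\Sigma$ let $v_0(p)$ span the $\eta$-eigenline of the monodromy $H^\lambda_{x_0}(\gamma)=F^\lambda(\tilde x_0+\gamma)$, and set $\psi(z,p)=F^\lambda(z)\,v_0(p)$. From $F^\lambda(z+\gamma)=F^\lambda(z)\,H^\lambda_{x_0}(\gamma)$ one gets $\psi(z+\gamma,p)=\eta\,\psi(z,p)$, so $\psi$ is a Bloch eigensection and $H^\lambda_{[z]}(\gamma)=F^\lambda(z)H^\lambda_{x_0}(\gamma)F^\lambda(z)^{-1}$ has eigenvector $\psi(z,p)$; hence $\mathcal L_{[z]}|_p=\C\,\psi(z,p)$, i.e. the eigenline bundle at an arbitrary base point is the parallel transport of $\mathcal L_{x_0}$. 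Since the \emph{line} $\C\psi(z,p)$ is $\Gamma$-invariant, $\Psi$ is well defined on $T^2$ and lifts to $\tilde\Psi\colon\C\to Jac(\Sigma)$, $z\mapsto\mathcal L_{[z]}\mathcal L_{x_0}^{-1}$. It thus suffices to show that $\tilde\Psi$ is the restriction of an $\R$-linear map $\C\to H^1(\Sigma,\mathcal O)$ modulo the period lattice.

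To see this, observe that multiplication by $F^\lambda(z)$ sends $v_0$ to $\psi(z,\cdot)$ and is a fibrewise isomorphism $\mathcal L_{x_0}\to\mathcal L_{[z]}$ which is holomorphic wherever $F^\lambda(z)$ is holomorphic in $\lambda$, that is, on $\Sigma\setminus\{\lambda=0,\infty\}$; moreover $v_0$ is independent of $z$. Consequently $\Psi(x)=\mathcal L_{[z]}\mathcal L_{x_0}^{-1}$ is represented by a \v Cech cocycle supported near the two points of $\Sigma$ over $\lambda=0$ and $\lambda=\infty$, whose only nontrivial transition is the factor $g_0,g_\infty$ by which $F^\lambda(z)$ rescales the eigenline relative to a local holomorphic frame there. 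The class in $H^1(\Sigma,\mathcal O)$ is then determined by the principal parts of $\log g_0$ and $\log g_\infty$ at these two points; all other contributions are $z$-independent and drop out of the derivative of the family.

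The crux is to extract these principal parts and see that they are $\R$-linear in $z$. Near $\lambda=0$ one has $\nabla^\lambda\sim\nabla+\tfrac12\lambda^{-1}\alpha'$, so the eigenline multiplier of $F^\lambda(z)$ has principal part $\tfrac12\lambda^{-1}(z-\tilde x_0)\mu_0+\cdots$, where $\mu_0$ is the eigenvalue of the Higgs field $\alpha'(\partial_z)$ along the limiting eigendirection. Since $\det\alpha'(\partial_z)$ is, up to a factor, the Hopf differential of $f$, and since a CMC torus has holomorphic Hopf differential on $T^2=\C/\Gamma$, hence a constant multiple of $dz^{2}$, the eigenvalue $\mu_0$ is independent of $z$; the principal part at $\lambda=0$ is therefore linear in $z-\tilde x_0$. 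The same computation at $\lambda=\infty$, using $\tfrac12\lambda\,\alpha''$ and the $\bar z$-direction, gives a principal part linear in $\bar z-\bar{\tilde x}_0$. Hence $\tilde\Psi$ depends $\R$-linearly on $z-\tilde x_0$, vanishes at $\tilde x_0$, and descends to the asserted homomorphism. The main obstacle is precisely this linearization step, namely the rigorous control of the extended frame at the branch points $\lambda=0,\infty$ (the Baker–Akhiezer essential-singularity asymptotics) and the verification that the subleading terms contribute nothing to the cohomology class; compatibility with the lattice $\Gamma$ is already guaranteed by the $\Gamma$-periodicity of the eigenline established in the first step.
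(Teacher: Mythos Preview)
The paper does not supply its own proof of this proposition; it is stated with attribution to Hitchin \cite{Hi}, Section~7, and no argument is given in the text. Your sketch follows Hitchin's strategy rather closely: one shows that the lift $\tilde\Psi\colon\C\to H^1(\Sigma,\mathcal O)$ is $\R$-affine by analysing the cocycle representing $\mathcal L_{[z]}\mathcal L_{x_0}^{-1}$, which is concentrated at the two points of $\Sigma$ lying over $\lambda=0$ and $\lambda=\infty$, and whose class is governed by the eigenvalue of the Higgs field $\alpha'(\partial_z)$. Your identification of the key ingredient---that this eigenvalue is constant because the Hopf differential on a torus is a constant multiple of $dz^2$---is exactly the point.

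Two remarks on the execution. First, since $\Sigma$ is branched over $\lambda=0$, the local parameter on $\Sigma$ near the corresponding point is $\zeta$ with $\zeta^2=\lambda$, not $\lambda$; the principal part you write should therefore be a \emph{simple} pole $\mu_0(z-\tilde x_0)\,\zeta^{-1}$ rather than $\tfrac12\lambda^{-1}(z-\tilde x_0)\mu_0$. Second, the cleanest way to make the step you flag as the ``main obstacle'' rigorous is Hitchin's own: rather than estimating the frame $F^\lambda(z)$ itself at the essential singularities, differentiate the family in $z$. The infinitesimal deformation $\partial_z\mathcal L_{[z]}\in H^1(\Sigma,\mathcal O)$ is represented by the principal parts of $\psi^{-1}\partial_z\psi$ at the two marked points, and from $\nabla^\lambda\psi=0$ one reads off directly that these principal parts are $\mu_0\,\zeta^{-1}$ and $\bar\mu_0\,\zeta_\infty^{-1}$, with no ``subleading'' ambiguity to control. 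Since this tangent vector is independent of $z$, integration gives the $\R$-affinity you want. This bypasses the Baker--Akhiezer asymptotics entirely.
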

 In order to make the map $\Psi$ explicit, it is necessary to compute the degree of $\mathcal L_x.$ Let $\sigma$ denote the hyperelliptic involution on $\Sigma.$
Then the bundles $\mathcal L_x$ and $\sigma^*\mathcal L_x$ are subbundles of the trivial bundle  $\Sigma \times W_x.$ Since $W$ is the pull-back of the spin bundle of $S^3$, there exists a symplectic form $\hat \omega$ on $W$. Therefore the evaluation of the symplectic form $\hat\omega_x$ on $\mathcal L_x \otimes \sigma^*\mathcal L_x$ defines a holomorphic map to $\C$. Thus  $\hat\omega_x$  is a section in the line bundle $\mathcal L^*_x \otimes (\sigma^*\mathcal L^*)_x$ over 
$\C P^1$. It vanishes exactly at those points, where $\mathcal L_x$ and $\sigma^*\mathcal L_x$ coincides. Therefore the zeros of $\hat \omega_x$ do not depend on $x.$ Obviously $\hat \omega_x$ vanishes at branch points of $\Sigma,$ thus  the degree of $\mathcal L^{*}_x$ is at least $g+1$.  

\begin{Def}
Let $f: T^2 \rightarrow S^3$ be a conformal immersion of constant mean curvature and $\Sigma$ its spectral curve. The genus $g$ of $\Sigma$ is called the geometric spectral genus of $f$. 
The arithmetic spectral genus $p$ of $f$ is the genus of the possibly singular curve 
$$\tilde \Sigma: \eta^2 =\lambda \Pi_{i = 1}^{p}\bar q_i^{-1}(\lambda - q_i)(\lambda - \bar q_i^{-1}),$$

 where the $q_i$'s are the  zeros of the symplectic form $\hat \omega$ counted with multiplicity.
\end{Def}

 \begin{Pro}[\cite{Hi}, Theorems 8.1 and  7.15]
The line bundle $\mathcal L^*_x,$ $x \in T^2,$ has degree $p+1,$ where $p$ is the arithmetic spectral genus of $f$. Further, the bundle $\mathcal L_x$ is non special for all $x\in T^2,$ i.e., $K \mathcal L_x^*$ has no holomorphic sections. 
\end{Pro}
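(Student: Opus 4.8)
The plan is to read off the degree from the symplectic pairing $\hat\omega_x$ and to deduce the non-specialty from the reality structure induced by the quaternionic structure $j$. First I would make precise that $\hat\omega_x$ is a holomorphic section of $\mathcal L^*_x\otimes\sigma^*\mathcal L^*_x$: since $W_x$ is two-dimensional, $\hat\omega_x$ is up to scale the determinant, so evaluating it on the two line subbundles $\mathcal L_x,\sigma^*\mathcal L_x\subset\Sigma\times W_x$ gives a bundle map $\mathcal L_x\otimes\sigma^*\mathcal L_x\to\wedge^2W_x\cong\mathcal O_\Sigma$, i.e.\ a section of the dual. A non-degenerate alternating form on a plane pairs two lines to zero exactly when they coincide, so this section vanishes precisely where $\mathcal L_x=\sigma^*\mathcal L_x$; away from the branch points the holonomy has two distinct eigenvalues and hence two distinct eigenlines, so all zeros lie over branch points. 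As $\sigma$ is an automorphism, $\deg\sigma^*\mathcal L^*_x=\deg\mathcal L^*_x$, and therefore $2\deg\mathcal L^*_x=\deg(\mathcal L^*_x\otimes\sigma^*\mathcal L^*_x)$ equals the number of zeros of $\hat\omega_x$ counted with multiplicity.

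It then remains to count these zeros. A short computation with a symplectic $2\times2$ holonomy shows that $\hat\omega_x(\mathcal L_x,\sigma^*\mathcal L_x)$ is proportional to $\mu-\mu^{-1}$, where $\mu,\mu^{-1}$ are the two eigenvalues and $(\mu-\mu^{-1})^2=\Tr(H^\lambda_x(\gamma))^2-4$ is the discriminant. Hence the vanishing order of $\hat\omega_x$ over a branch point is governed by the vanishing order of the discriminant there, and summing these local contributions reproduces exactly the zeros of $\hat\omega$ counted with multiplicity, namely the $2p$ points $q_i,\bar q_i^{-1}$ together with the two branch points over $\lambda=0,\infty$. By the definition of the arithmetic spectral genus this total is $2p+2$, so $\deg\mathcal L^*_x=p+1$. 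The only delicate bookkeeping I anticipate is at those $\lambda_0$ where the discriminant vanishes to even order: these are resolved in the normalization $\Sigma$, where they contribute two unbranched points rather than one branch point, but they are the nodes of $\tilde\Sigma$, and one must check that the order of $\hat\omega_x$ distributed over the two preimages matches the multiplicity assigned in the definition of $\tilde\Sigma$.

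For the non-specialty I would argue by contradiction, using the anti-holomorphic involution $\rho$ coming from $j$. The symmetry $\nabla^{\bar\lambda^{-1}}=j^{-1}\nabla^\lambda j$ makes $\nabla^\lambda$ unitary for $\lambda\in S^1$ and endows the eigenline bundle with a reality condition relating $\mathcal L_x$ to its $\rho$-image, together with a Hermitian pairing on the relevant cohomology that is sign-definite precisely because of this unitarity on the circle. I would organise this through the exact sequence $0\to\mathcal L_x\to\Sigma\times W_x\to(\Sigma\times W_x)/\mathcal L_x\to0$ and the identification $(\Sigma\times W_x)/\mathcal L_x\cong\sigma^*\mathcal L_x\otimes\mathcal O(B)$, with $B$ the branch divisor of degree $2p+2$: the associated long exact sequence expresses the obstruction space to non-specialty, on which $\rho$ acts, and the definite pairing forces it to vanish, since a nonzero section violating non-specialty would yield a nonzero vector of non-positive norm.

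I expect this last step to be the main obstacle. The degree is in the end a zero-counting argument, but ruling out special divisors \emph{uniformly} in $x$ cannot follow from genericity: by the preceding Proposition $x\mapsto\mathcal L_x\mathcal L^{-1}_{x_0}$ is a linear flow on $\operatorname{Jac}(\Sigma)$, and one must show that this flow never meets the theta divisor. It is the reality structure $\rho$, that is the unitarity of the family on $S^1$ and the resulting positive-definite metric, that has to be used in an essential way to secure this for every $x\in T^2$.
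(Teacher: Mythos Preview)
The paper does not give its own proof of this Proposition; it is recorded as a citation of Hitchin's Theorems 8.1 and 7.15, with only the paragraph preceding the Definition sketching the setup. That paragraph is exactly your first step: $\hat\omega_x$, viewed as a section of $\mathcal L^*_x\otimes\sigma^*\mathcal L^*_x$, vanishes precisely where the two eigenlines coalesce, hence at all branch points, whence $\deg\mathcal L^*_x\ge g+1$. Since the arithmetic genus $p$ is then \emph{defined} in the paper so that the zero divisor of $\hat\omega_x$ (counted with multiplicity) consists of the $q_i,\bar q_i^{-1}$ together with $0,\infty$, i.e.\ has total degree $2p+2$, your computation $2\deg\mathcal L^*_x=2p+2$ is essentially built into that definition and is correct. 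Your caveat about the bookkeeping at even-order zeros of the discriminant (the nodes of $\tilde\Sigma$, resolved to two unbranched points on $\Sigma$) is well placed and is the only point requiring care; once one checks that the vanishing order of $\hat\omega_x$ distributes correctly over the two preimages, the count goes through.

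For non-specialty your instinct is also right and matches Hitchin's strategy in Theorem 7.15: the conclusion cannot follow from genericity (the linear flow $x\mapsto\mathcal L_x\mathcal L_{x_0}^{-1}$ might a priori meet the theta divisor), and it is the reality structure coming from $j$ that does the work. In Hitchin's argument the unitarity of $\nabla^\lambda$ for $|\lambda|=1$ yields a definite Hermitian pairing, and a hypothetical holomorphic section of $K\mathcal L^*_x$ is shown to be isotropic for it, forcing it to vanish. Your proposed organisation via the short exact sequence and the $\rho$-action on the obstruction space is a reasonable way to package this; making the pairing and its definiteness explicit is, as you anticipate, where the actual substance lies.
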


For a given spectral curve and a family of line bundles $\Psi(T^2)$ compatible with all involutions, we can reconstruct by \cite{Hi} an associated family of flat connections of the form (\ref{flat}) and thus also an associated family of constant mean curvature tori.
\begin{The}[\cite{Hi}, Theorem 8.1]\label{hitchin}
Let $\tilde \Sigma$ be a (possibly singular) hyperelliptic curve over $\C P^1$ defined by the equation
$$\eta^2 =\lambda \Pi_{i = 1}^{p}\bar q_i^{-1}(\lambda - q_i)(\lambda - \bar q_i^{-1}),$$
And let $$\Psi: T^2 \rightarrow Pic_{p+1}(\Sigma), x \mapsto \mathcal L_x$$ be a group homomorphism. 
Suppose all $\mathcal L_x \in \Psi(T^2)$  are non special and $ \rho^* \mathcal L_x = \mathcal L_x \jj.$
Further, suppose $\mathcal L^*_x \otimes \sigma ^*\mathcal L^*_x = L(2p+2)$,
where $L(2p+2)$ is the bundle of degree $2p+2$ on $\CP^1.$ \\

Then we can construct a family of flat connections $\nabla^{\lambda}$ on $W$ for which the lines $\mathcal L|_{(\eta, \lambda)}$ and $\mathcal L|_{(-\eta, \lambda)}$ define a parallel frame of $W$ w.r.t. $\nabla^{\lambda}.$ 
This family has the form
 $$\nabla^{\lambda} =  \nabla + \tfrac{1}{2}(1+ \lambda^{-1} ) \alpha' + \tfrac{1}{2}(1+ \lambda)\alpha'',$$
 for some unitary connection $\nabla$.
\end{The}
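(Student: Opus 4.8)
The plan is to carry out the algebro-geometric (Baker--Akhiezer) reconstruction: to build the parallel frame directly from holomorphic sections of the eigenline bundles $\mathcal L_x$ and then to read off the connection from its singularities in the spectral parameter $\lambda$.

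First I would set up the section bundle. Since $\Sigma$ has genus $p$ and $\deg\mathcal L_x = p+1$, Riemann--Roch gives $h^0(\mathcal L_x) - h^1(\mathcal L_x) = 2$, and the non-specialness hypothesis $h^0(K\mathcal L_x^*)=0$ forces $h^0(\mathcal L_x)=2$ for every $x$. Hence $x \mapsto H^0(\Sigma,\mathcal L_x)^*$ is a holomorphic rank-$2$ bundle over $T^2$, which I would identify with $W$: the reality relation $\rho^*\mathcal L_x = \mathcal L_x\jj$ induces a quaternionic structure $j$, and the condition $\mathcal L_x^* \otimes \sigma^*\mathcal L_x^* = L(2p+2)$ (independence of $x$) produces a $\sigma$-pairing to be identified with the symplectic form $\hat\omega$. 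For $P=(\eta,\lambda)\in\Sigma$ the evaluation functional $\mathrm{ev}_P$ spans a line in $H^0(\mathcal L_x)^*=W_x$, and this line is the eigenline $\mathcal L|_P$; for generic $\lambda$ the two lines $\mathcal L|_{(\pm\eta,\lambda)}$ are transverse, so they constitute a frame of $W$. The whole problem is then to produce a connection for which, at each fixed $\lambda$, these two lines are parallel as $x$ varies.

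Next I would construct the Baker--Akhiezer function. Lifting to the universal cover $\C\to T^2=\C/\Gamma$ with coordinate $z$, the hypothesis that $\Psi$ is a homomorphism says that $\mathcal L_z\mathcal L_{z_0}^{-1}$ traces a straight line (two commuting real flows) in $Jac(\Sigma)$. Standard Baker--Akhiezer theory then yields, for each $z$, a section $\psi(z,\cdot)$ of $\mathcal L_z$, unique up to scale, with prescribed exponential essential singularities at the two branch points $o_0,o_\infty$ over $\lambda=0,\infty$: writing $\eta$ for the local coordinate with $\eta^2\sim\lambda$ near $o_0$ and $\tilde\eta^2\sim\lambda^{-1}$ near $o_\infty$, I would impose $\psi\sim\exp(\tfrac{z}{2}\eta^{-2})$ at $o_0$ and $\psi\sim\exp(\tfrac{\bar z}{2}\tilde\eta^{-2})$ at $o_\infty$. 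Existence and uniqueness up to scale is exactly where non-specialness of $\mathcal L_z$ enters, and the linearity of the flow guarantees that the $z$-dependence is smooth with the correct $\Gamma$-quasiperiodicity. Evaluating at the two sheets $P_\pm(\lambda)$ of a generic $\lambda$ assembles a $2\times2$ frame $\Phi^\lambda(z)$, and I would define $\nabla^\lambda := d - (d\Phi^\lambda)(\Phi^\lambda)^{-1}$, which is flat by construction and has $\mathcal L|_{(\pm\eta,\lambda)}$ as parallel frame. To pin down its form I would analyze $(d\Phi^\lambda)(\Phi^\lambda)^{-1}$ in $\lambda$: it is holomorphic on $\C_*$ and, because the essential singularities have exponential order one in $\lambda$ (forced by the branching $\eta^2\sim\lambda$, so $\eta^{-2}\sim\lambda^{-1}$), it has at most simple poles at $0,\infty$ with principal parts read off from the exponents. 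Since the $\del_z$-exponent sits only at $o_0$ and the $\del_{\bar z}$-exponent only at $o_\infty$, the $dz$-part contributes $\tfrac12(1+\lambda^{-1})\alpha'$ and the $d\bar z$-part $\tfrac12(1+\lambda)\alpha''$, which is precisely (\ref{flat}); the reality conditions then force $\nabla^\lambda$ to intertwine with $j$ and preserve $\hat\omega$ under $\lambda\mapsto\bar\lambda^{-1}$, equivalently $\nabla$ unitary and $\alpha''=\overline{\alpha'}$.

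I expect the main obstacle to be the precise local analysis at the branch points $o_0,o_\infty$: proving that the Baker--Akhiezer function exists, is unique, and has essential singularity of exactly exponential order one in $\lambda$, so that $(d\Phi^\lambda)(\Phi^\lambda)^{-1}$ acquires only simple poles there with no higher-order terms. Controlling this, together with verifying that the reconstructed $\nabla^\lambda$ is genuinely $\Gamma$-periodic in $z$ — which rests on $\Psi$ landing in $T^2$ rather than in $\R^2$ — is the technical heart; the identification of $W$ and the reality-and-unitarity bookkeeping are then comparatively formal.
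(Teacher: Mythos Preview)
The paper does not give its own proof of this statement: Theorem~\ref{hitchin} is quoted verbatim as Hitchin's Theorem~8.1 from \cite{Hi} and is used as a black box in the proof of Theorem~\ref{CMCs3}. So there is nothing in the present paper to compare your argument against.

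That said, your sketch is a legitimate route to the result, but it is not quite Hitchin's. Hitchin works with the direct image $\pi_*\mathcal L_x$ under the hyperelliptic projection $\pi:\Sigma\to\C P^1$, showing it is a rank-$2$ bundle that is trivial on $\C_*$ and analyzing its transition functions at $0$ and $\infty$ to extract the connection form; the Baker--Akhiezer function never appears explicitly. Your approach --- trivializing via a BA frame with prescribed exponential singularities at the branch points over $0,\infty$ and reading the connection off from the principal parts of $(d\Phi^\lambda)(\Phi^\lambda)^{-1}$ --- is closer in spirit to the Krichever/Bobenko treatment (cf.\ \cite{Bob1,Bob2}). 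The two arguments are equivalent under the standard dictionary between direct-image bundles and BA functions, and both hinge on the same three inputs you isolate: Riemann--Roch plus non-specialness to get $h^0=2$, the linearity of $\Psi$ to control the $z$-dependence, and the branching at $0,\infty$ to force simple poles in $\lambda$. Your identification of the delicate point --- that the essential singularity has exactly exponential order one in $\lambda$, hence only simple poles appear --- is accurate; in Hitchin's language this is the computation of the extension class of $\pi_*\mathcal L_x$ at $0$ and $\infty$.
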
 

\section
{The Constrained Willmore Case}
In order to define the associated family of flat connections for constrained Willmore tori, we recall come facts on quaternionic surface theory developed in \cite{BuFLPP}. 
The conformal geometry of the $4-$sphere is modeled by the projective geometry of the quaternionic projective line $\H P^1.$  
A map $f: T^2 \rightarrow S^4 \cong \H P^1$ is given by a quaternionic line subbundle $L := f^*\mathcal T$ of the trivial $\H^2-$bundle $V := T^2 \times \H^2, $ where $\mathcal T$ is the tautological bundle of $\H P^1$. Using affine coordinates we can consider $$S^3 \subset \H \hookrightarrow \H P^1,  \lambda \mapsto [\lambda, 1].$$ 
An oriented round $2-$sphere in $S^4$ is given by a linear map $S: \H^2 \rightarrow \H^2$ with $S^2 = -1,$ i.e., a complex structure on $\H^2$.  
For given $f$  there exist a natural map $S$ from the torus into the space of oriented $2-$spheres - the conformal Gau\ss $ $	map. It assigns to every point $p$ of the surface a $2-$sphere $S_p$ through the point $f(p)$ with the same tangent plane and the same mean curvature. The pair $(V, S)$ is then a complex quaternionic vector bundle.The  trivial connection $\hat \nabla$ on $V$ splits into a complex linear part $\hat\nabla'$ and a complex anti-linear part $\hat\nabla''$ with respect to $S.$  
 We denote by $A$ and $Q$ the $S-$anti-commuting parts of $\hat\nabla'$ and $\hat\nabla''$ respectively.
 
\begin{The}[\cite{B}]
Let $f: T^2 \rightarrow S^3$ be a conformal immersion. It is constrained Willmore if there exist a $1-$form $\nu \in \Omega^1(\mathcal R)$ such that 
$$d^{\hat\nabla} (2 *A  + \nu) = 0,$$
where $\mathcal R = \{B \in \text{End}(V) \ |\  \text{Im}B \subset L \subset \text{Ker}B\}. $ 
\end{The}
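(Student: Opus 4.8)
The plan is to obtain the stated characterization as the Euler--Lagrange equation of the Willmore functional $\Will$ subject to the conformal constraint, expressed in the quaternionic language of \cite{BuFLPP}. First I would recall that, up to an additive constant of topological nature, the Willmore energy is a positive multiple of $\int_{T^2}\langle A\wedge *A\rangle$, and that an infinitesimal variation of the immersion $f$ is encoded by a section $\delta$ of $\Hom(L,V/L)$. Differentiating along a variation and integrating by parts over $T^2$ (using flatness of $\hat\nabla$, so that the double $d^{\hat\nabla}$ terms drop out) produces a first variation of the shape
\[
\tfrac{d}{dt}\Big|_{t=0}\Will(f_t)=c\int_{T^2}\langle d^{\hat\nabla}*A,\ \delta\rangle
\]
for a fixed nonzero constant $c$. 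In particular the unconstrained critical points are the Willmore surfaces, for which the conformal Gau\ss\ map $S$ is harmonic, i.e.\ $d^{\hat\nabla}*A=0$; this is the case $\nu=0$ of the statement.

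Next I would identify the conformal constraint. A variation $\delta$ preserves the conformal structure of $T^2$ to first order exactly when the induced linear functional on the space $H^0(K^2)$ of holomorphic quadratic differentials vanishes; equivalently the admissible variations form the kernel of a linear map $\delta\mapsto(q\mapsto\langle q,\delta\rangle)$ into $H^0(K^2)^*$. Thus $f$ is constrained Willmore precisely when $\int_{T^2}\langle d^{\hat\nabla}*A,\delta\rangle=0$ for every $\delta$ in this kernel. By the Lagrange multiplier principle this is equivalent to the existence of a single holomorphic quadratic differential $q$ such that, for all variations $\delta$,
\[
c\int_{T^2}\langle d^{\hat\nabla}*A,\delta\rangle=\langle q,\delta\rangle.
\]

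The decisive step is then to translate the scalar multiplier $q$ into a $1$-form with values in $\mathcal R$. Here one uses the special structure of $\mathcal R$: an endomorphism $B$ with $\Im B\subset L\subset\operatorname{Ker}B$ is nilpotent and adapted to the flag $L\subset V$, and it is exactly through such endomorphisms that the infinitesimal conformal (Teichm\"uller) freedom is encoded, so that a term $d^{\hat\nabla}\nu$ with $\nu\in\Omega^1(\mathcal R)$ enters the harmonicity equation purely as the Lagrange multiplier of the conformal constraint. Concretely, I would show that the pairing with $q$ can be rewritten as $\langle q,\delta\rangle=-\int_{T^2}\langle d^{\hat\nabla}\nu,\delta\rangle$ for a suitable $\nu\in\Omega^1(\mathcal R)$ determined by $q$, after normalizing the constant $c$. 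Substituting into the previous identity and letting $\delta$ range over all of $\Gamma(\Hom(L,V/L))$ yields $2d^{\hat\nabla}*A+d^{\hat\nabla}\nu=0$, that is $d^{\hat\nabla}(2*A+\nu)=0$. The converse is immediate: given such a $\nu$ one reads off $q$ from $d^{\hat\nabla}\nu$ and reverses the computation, so the two conditions are equivalent.

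I expect the construction of $\nu$ from $q$ to be the genuine obstacle. It is a cohomological matching problem: one must verify that the linear functionals on variations arising from holomorphic quadratic differentials lie in the image of $d^{\hat\nabla}$ acting on $\Omega^1(\mathcal R)$, that $\nu$ can be chosen globally on the torus, and that it is compatible with the quaternionic structures used to define $S$ and $A$. By contrast, the first-variation computation and the Lagrange-multiplier argument are essentially formal once the quaternionic formalism is in place; the real content is the identification of the cokernel of $d^{\hat\nabla}$ on $\mathcal R$-valued forms with the dual of $H^0(K^2)$, which is where the work of \cite{B} is concentrated.
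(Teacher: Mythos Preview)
The paper does not prove this theorem: it is quoted verbatim from \cite{B} (with the characterization also appearing in \cite{BoPetP}), and the text moves on immediately to the remark that $\nu$ is the Lagrange multiplier. So there is no ``paper's own proof'' to compare your proposal against.

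That said, your sketch follows the standard line of argument in \cite{BoPetP} and \cite{B}: write the Willmore energy as $\int\langle A\wedge *A\rangle$ (up to a topological constant), compute the first variation, apply a Lagrange-multiplier argument against the finite-dimensional space $H^0(K^2)$ of holomorphic quadratic differentials, and then reinterpret the multiplier as $d^{\hat\nabla}\nu$ with $\nu\in\Omega^1(\mathcal R)$. You have correctly located the nontrivial step --- the identification of the multiplier $q\in H^0(K^2)$ with an $\mathcal R$-valued $1$-form --- and your description of it as a cohomological matching problem is accurate. One point to tighten: the pairing is not against arbitrary $\delta\in\Gamma(\Hom(L,V/L))$ but against the normal variation encoded there, and the passage from $q$ to $\nu$ uses that $\mathcal R\cong\Hom(V/L,L)$ together with the conformal structure of $f$ to build the $1$-form; holomorphicity of $q$ is what makes the resulting $\nu$ well defined (not merely that $d^{\hat\nabla}\nu$ represents the right functional). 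With that refinement your outline matches the proof in the cited references.
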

The $1-$form $\nu$ is the Lagrange multiplier. It is uniquely determined unless the immersion is isothermic. Let $A_0$ be the $1-$form defined by $2*A_0 = 2 *A  + \nu.$ Consider $V$ as a complex rank $4$ bundle with complex structure $i$ given by the left multiplication of the quaternionic $\ii$ together with an anti-linear endomorphism $j$ which is the right multiplication by the quaternion $\jj.$
The Euler-Lagrange equation for constrained Willmore tori is equivalent to the flatness of the associated family of $SL(4, \C)-$connections
$$\nabla^{\mu} = \hat\nabla +  (\mu -1)\frac{1- i S}{2}A_0 + (\mu^{-1} -1)\frac{1+ i S}{2}A_0,$$
for $\mu \in \C_*.$
This associated family $\nabla^{\mu}$ has the symmetry
$$\nabla^{\bar \mu^{-1}} = j^{-1} \nabla^{\mu}j.$$
Thus for all $\mu \in S^1 \subset \C_*$ the connection $\nabla^{\mu}$ is quaternionic.\\

For a fixed point $x\in T^2 = \C /\Gamma$ consider the holonomy representations $H_x^\mu$ of the associated family $\nabla^{\mu}.$ The representations are holomorphic in $\mu$ and the holonomy $H_x^\mu$ to different basis points are conjugated. Thus the eigenvalues are independent of $x\in T^2.$
Since the first fundamental group of $T^2$ is abelian, we get that every simple eigenspace of  $H_x^\mu(\gamma_0)$ for $\gamma_0 \in \Gamma$ is a simultaneous eigenspace for all $\gamma \in \Gamma.$ 

\begin{The}[\cite{B}, Proposition 3.1 and  Theorem 5.1]
Let $f: T^2 \rightarrow S^3$ be a constrained Willmore torus. Then the holonomy representation of the associated family $\nabla^{\mu}$ belongs to the following $2$ cases:
\begin{enumerate}
\item For $\gamma \in \Gamma$  the holonomy $H_x^\mu(\gamma)$ has  $4$ distinct eigenvalues for generic $\mu \in \C_*$. These eigenvalues are non-constant in $\mu.$  
\item All holonomies $H_x^\mu(\gamma)$ have a $2-$dimensional common eigenspace with eigenvalue $1$ and $H_p^\mu(\gamma)$ has  $2$ distinct and non constant eigenvalues for generic $\mu \in \C_*$. 
\end{enumerate}
\end{The}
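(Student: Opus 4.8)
The plan is to read off the two spectral pictures directly from the shape of the family $\nabla^\mu$ together with its symmetries. Since $\Gamma\cong\Z^2$ is abelian the holonomies $\{H_x^\mu(\gamma)\}_{\gamma\in\Gamma}$ commute, so for generic $\mu$ they are simultaneously diagonalisable and, as already noted, every simple eigenline of a fixed generator $\gamma_0$ is a joint eigenline for all of $\Gamma$. I therefore work with the characteristic polynomial $\chi_\mu(\eta)=\det\!\big(H_x^\mu(\gamma_0)-\eta\,\Id\big)$, whose coefficients are holomorphic on $\C_*$ and whose constant term equals $\det H_x^\mu(\gamma_0)=1$ because $\nabla^\mu$ is an $SL(4,\C)$-connection. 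Two elementary observations constrain the four eigenvalue branches $\eta_1(\mu),\dots,\eta_4(\mu)$. First, at $\mu=1$ the family degenerates to the trivial connection $\hat\nabla$, so $H_x^1(\gamma_0)=\Id$ and $\eta_i(1)=1$ for all $i$; in particular any branch that is constant in $\mu$ is identically $1$. Second, the symmetry $\nabla^{\bar\mu^{-1}}=j^{-1}\nabla^\mu j$ gives $H_x^{\bar\mu^{-1}}(\gamma_0)=j^{-1}H_x^\mu(\gamma_0)j$, and since $j$ is antilinear with $j^2=-1$ it preserves the subbundle $P:=\{\psi : \nabla^\mu\psi=0 \text{ for all } \mu\in\C_*\}$ of sections parallel for the \emph{entire} family. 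Being $j$-invariant, $P$ is a quaternionic subbundle and hence has even complex rank, so $\dim_\C P\in\{0,2,4\}$.

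Expanding $\nabla^\mu\psi=0$ in the linearly independent functions $1,\mu,\mu^{-1}$ shows that $\psi\in P$ iff $\hat\nabla\psi=0$ and $A_0\psi=0$; such a $\psi$ is fixed by every holonomy, so $P$ is exactly the $\mu$-independent common eigenspace with eigenvalue $1$. This already produces the dichotomy. If $\dim_\C P=4$ the whole family reduces to $\hat\nabla$, i.e. $A_0\equiv0$; this degenerate case does not arise for a non-trivially immersed constrained Willmore torus and is discarded. If $\dim_\C P=2$ we obtain the two-dimensional common eigenspace with eigenvalue $1$ of case~(2); the product of all four branches is $\det H_x^\mu(\gamma_0)=1$ and two of them are $\equiv1$, whence the remaining two satisfy $\eta_2=\eta_1^{-1}$. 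Because $\eta_1(1)=1$ forbids $\eta_1\equiv-1$ and (see below) $\eta_1$ is non-constant, $\eta_1=\pm1$ only at isolated parameters, so the two non-trivial branches are distinct for generic $\mu$, as claimed. The case $\dim_\C P=0$ must give four non-constant, generically distinct branches, which is case~(1).

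The genuinely hard point, and the one I would spend most effort on, is the non-constancy and generic distinctness of the branches not coming from $P$. Here I would analyse the holonomy as $\mu\to0$ and $\mu\to\infty$, where the family has a simple pole: near $\mu=\infty$ the leading term is $\mu\,\tfrac{1-iS}{2}A_0$ and near $\mu=0$ it is $\mu^{-1}\,\tfrac{1+iS}{2}A_0$. Consequently each eigenvalue either stays bounded at both punctures --- which by the normalisation $\eta_i(1)=1$ forces it to be $\equiv1$ and, after identifying the bounded directions with $\ker A_0$, to account for a summand of $P$ --- or it has a genuine essential singularity $\exp(a_i\mu+\cdots)$, $\exp(b_i\mu^{-1}+\cdots)$ and is therefore non-constant. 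Two such branches coincide identically only if their leading exponents agree, so the remaining task is to show that the (path-integrated along $\gamma_0$) leading terms $\tfrac{1\mp iS}{2}A_0$ have simple spectrum on the complement of $P$; this is where the conformal geometry of the immersion in $S^3$ must enter, and I expect it to be the crux of the argument. Granting distinct leading exponents, the non-constant branches are pairwise non-identical holomorphic functions and hence distinct away from a discrete set, which completes case~(1) and, together with the reciprocity $\eta_2=\eta_1^{-1}$, case~(2).
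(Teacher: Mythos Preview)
The paper does not prove this statement at all: it is quoted verbatim from Bohle's article \cite{B} (Proposition~3.1 and Theorem~5.1) and used as a black box. So there is no ``paper's own proof'' to compare against; what follows is an assessment of your independent attempt.

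The structural half of your argument is sound and close to how Bohle organises things. Defining $P$ as the space of sections parallel for the \emph{entire} family, checking $j$-invariance, and hence forcing $\dim_\C P\in\{0,2,4\}$ is correct; so is the identification of $P$ with the constant sections annihilated by $A_0$, and the reading-off of the $2$-dimensional common eigenspace with eigenvalue $1$ in case~(2).

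The genuine gap is precisely where you flag it, and it is more serious than you indicate. The leading coefficients $\tfrac{1\mp iS}{2}A_0$ at $\mu\to\infty,0$ are \emph{pointwise nilpotent}: since $A_0$ anticommutes with $S$, the endomorphism $\tfrac{1-iS}{2}A_0$ kills the $+i$-eigenspace of $S$ and maps the $-i$-eigenspace into it, so its square vanishes. Hence the holonomy does \emph{not} have asymptotics $\exp(a_i\mu+\cdots)$ with $a_i$ the eigenvalues of the leading term --- those eigenvalues are all zero. What Bohle (following Hitchin in the harmonic-map case) actually does is a WKB analysis after a $\mu$-dependent shearing gauge of the form $\mathrm{diag}(\mu^{1/2},\mu^{-1/2},\ldots)$, which converts the nilpotent pole into one with nonzero semisimple part; the resulting exponents scale like $\sqrt{\mu}$ and are expressed through the Hopf differential. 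Only at this stage does the conformal geometry of $f$ enter, and only then can one separate the branches and establish non-constancy and generic distinctness. Your sketch does not supply this, and without it neither case~(1) nor the ``two distinct non-constant eigenvalues'' clause of case~(2) is established.

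Two smaller points you should also address. First, ``bounded at both punctures $\Rightarrow$ identically $1$'' tacitly treats each $\eta_i$ as a single-valued holomorphic function on $\C_*$; in general the eigenvalue branches permute under monodromy around $0$ and $\infty$, so Liouville is not immediately available. Second, even if some branch is identically $1$, the corresponding eigenline may a priori vary with $\mu$; concluding that it comes from a section in $P$ (i.e.\ parallel for all $\mu$) needs an argument, not just the equality of eigenvalues.
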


The spectral curve $\Sigma$ of a constrained Willmore torus is the normalization and compactification of the $1-$dimensional analytic variety 
$$\{(\eta, \mu) \in \C_* \times \C_*| f(\eta, \mu) = 0\} \quad \quad \text{with} \quad \quad f(\eta, \mu) = \text{det}(H_p^{\mu}(\gamma) -  \eta Id).$$

This spectral curve differs from the one defined in \cite{BLPP}, $\hat\Sigma,$ only by two points which we denote by $0$ and $\infty$. These points corresponds to the ends of $\hat \Sigma.$ 
It is a $4-$ fold covering of $\C P^1,$ if the holonomy representation of $\nabla^{\mu}$ belongs to case $(1)$ and it is hyperelliptic if the holonomy representation of $\nabla^{\mu}$ belongs to case $(2)$. We restrict ourselves in the following to the case, where the spectral curve is a $4-$fold covering of $\C P^1$ since the second case only occurs for CMC surfaces by \cite{B}, Proposition (6.12).\\

We have two involutions $\rho$ and $\sigma$ on the spectral curve. Because of the symmetry $j$ of $\nabla^{\mu}$the involution $\rho$ covers the involution $\mu \mapsto \bar \mu^{-1}$ on $\C P^1.$ 
By the following lemma the involution $\sigma$ fixes the spectral parameter $\mu$.

\begin{Lem}\label{symplectic}
Let $f: T^2 \rightarrow S^3$ be a constrained Willmore torus and $\nabla^{\mu}$ the corresponding associated family of flat connections. Further let $H_x^{\mu}(\gamma)$ denote the holonomy of $\nabla^{\mu}$ at a base point $x \in T^2$ along $\gamma \in \Gamma.$
If $\eta$ is an eigenvalue of $H_x^{\mu}(\gamma)$
 then  $\eta^{-1}$ is also an eigenvalue of  $H_x^{\mu}(\gamma)$.
 \end{Lem}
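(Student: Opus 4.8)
The statement is equivalent to showing that the holonomy $H^\mu_x(\gamma)$ is conjugate to its inverse-transpose, i.e.\ that it lies in the complex symplectic group $\mathrm{Sp}(4,\C)$ relative to a suitable non-degenerate skew form on the fibre $V_x$. Indeed, if $M$ preserves a skew form with Gram matrix $\Omega$, then $M^{-1}=\Omega^{-1}M^{T}\Omega$, so its spectrum is invariant under $\eta\mapsto\eta^{-1}$. The plan is therefore to produce a complex-bilinear, skew-symmetric, non-degenerate form $\omega$ on $V$ that is parallel for every $\nabla^{\mu}$: parallel transport then preserves $\omega$, the holonomy lands in $\mathrm{Sp}(4,\C)$, and the eigenvalue symmetry follows at once. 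Note that for the rank $2$ CMC situation of Section \ref{CMCteil} this is automatic, since the two eigenvalues of an $\SL(2,\C)$-holonomy already multiply to $1$; the rank $4$ case genuinely requires the symplectic structure.

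First I would fix the algebraic form. Viewing $V=T^2\times\H^2$ as a complex rank $4$ bundle via $i$, the quaternionic structure $j$ determines, up to scale, a canonical $j$-compatible complex-symplectic form $\omega$, obtained as the $\jj$-component in the decomposition of the standard quaternionic Hermitian pairing on $\H^2$ into a complex Hermitian and a complex symplectic part. This $\omega$ is constant on the trivial bundle, hence automatically parallel for $\hat\nabla$. It then remains to check that the $\mu$-dependent correction $\beta_\mu:=(\mu-1)\tfrac{1-iS}{2}A_0+(\mu^{-1}-1)\tfrac{1+iS}{2}A_0$ takes values in the symplectic Lie algebra $\mathfrak{sp}(\omega)$, i.e.\ $\omega(\beta_\mu\,\cdot\,,\,\cdot\,)+\omega(\,\cdot\,,\beta_\mu\,\cdot\,)=0$.

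The verification of this last point is the technical heart, and here I would draw on two facts from the quaternionic surface theory of \cite{BuFLPP}. First, the mean curvature sphere $S$ is a quaternionic complex structure ($Sj=jS$, $S^{2}=-1$) and is $\omega$-compatible, so $S$ is $\omega$-skew and the projectors $\tfrac{1\pm iS}{2}$ are $\omega$-adjoint to one another. Second, the endomorphism $A_0$, built from $A$ and the Lagrange multiplier $\nu\in\Omega^1(\mathcal R)$, should be $\omega$-skew by virtue of the adjointness relations satisfied by $A$ and $Q$ together with the fact that the line $L$ is $\omega$-isotropic (so that any $B$ with $\operatorname{Im}B\subset L\subset\operatorname{Ker}B$ is $\omega$-skew). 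Combining these shows each summand of $\beta_\mu$ lies in $\mathfrak{sp}(\omega)$. Since $\nabla^{\mu}$ depends holomorphically on $\mu$, it suffices in fact to check $\omega$-compatibility for $\mu\in S^1$, where $\nabla^{\mu}$ is quaternionic and the computation simplifies.

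I expect the main obstacle to be precisely the $\omega$-skewness of the $A_0$-term: one must track how the Lagrange multiplier $\nu$ interacts with the symplectic pairing and confirm that replacing $2*A$ by $2*A+\nu$ does not destroy the adjointness enjoyed by $A$ alone. Once $\beta_\mu\in\mathfrak{sp}(\omega)$ is established, the holonomy of $\nabla^{\mu}$ preserves $\omega$, hence lies in $\mathrm{Sp}(4,\C)$, so whenever $\eta$ is an eigenvalue of $H^{\mu}_x(\gamma)$ so is $\eta^{-1}$, which is the assertion. As a by-product this produces the involution $\sigma\colon(\eta,\mu)\mapsto(\eta^{-1},\mu)$ on $\Sigma$ and confirms that it fixes the spectral parameter $\mu$.
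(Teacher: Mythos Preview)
Your high-level strategy --- produce a non-degenerate skew form on $V_x$ preserved by the holonomy, hence eigenvalues pair as $\eta,\eta^{-1}$ --- is exactly the right idea, and is also what underlies the paper's argument. The gap is in the specific form you choose. You take $\omega$ to be the $\jj$-component of the \emph{standard definite} quaternionic Hermitian pairing on $\H^2$; but this form does not see the hypothesis $f(T^2)\subset S^3$ at all, and the claims you need about $S$ and $A_0$ fail for it. In particular, $S$ commuting with $j$ does \emph{not} make $S$ skew for your $\omega$ (that would require $S$ to be skew for the standard Hermitian form, which it is not), and the adjointness relations you invoke for $A$ and $Q$ are relations with respect to a different pairing. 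The correct bilinear form is the one singled out by the $3$-sphere: the indefinite product $(v,w)=\bar v_1 w_2+\bar v_2 w_1$ on $\H^2$, whose null lines are exactly $S^3\subset\H P^1$, and whose $\ii$-anti-commuting part is the relevant symplectic $\omega$. It is precisely for this pairing that one has $L=L^{\perp}$, $S=S^*$, and hence $A=-Q^*$, and this is where the codimension-one hypothesis enters.

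Even with the right $\omega$, the paper does not proceed by showing $\beta_\mu\in\mathfrak{sp}(\omega)$ directly. Instead it introduces the companion family $\tilde\nabla^{\mu}$ obtained by replacing $A_0$ by $Q_0$, observes (using $A^{\perp}=-Q^*$, $\nu^{\perp}=-\nu^*$ and the $S^3$-identities above) that $\nabla^{\mu}=(\nabla^{\perp})^{\mu}$ is $(.,.)$-\emph{dual} to $\tilde\nabla^{\mu}$, and then exhibits an explicit gauge $\mathbbm g=\tfrac12\big((\mu+1)-i(\mu-1)S\big)$ taking $\nabla^{\mu}$ to $\tilde\nabla^{\mu}$. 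Duality gives that eigenvalues of the $\tilde\nabla^{\mu}$-holonomy are the inverses of those of $\nabla^{\mu}$, and the gauge (being globally defined on $T^2$) shows the two holonomies are conjugate, hence have the same spectrum. Note that $\mathbbm g$ depends on $\mu$, so the argument does not produce a single $\mu$-independent parallel symplectic form; your direct $\mathfrak{sp}(\omega)$ route would have to contend with this. In short: keep your strategy, replace your $\omega$ by the $S^3$-form, and expect to pass through the dual family $\tilde\nabla^{\mu}$ and the gauge $\mathbbm g$ rather than a bare Lie-algebra check.
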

 \begin{proof}
 In the quaternionic setup the conformal $3-$sphere in $\H P^1$ is  given by the null lines of the indefinite product $(.,.)$ on $\H^2$
$$(v,w) = \bar v_1 w_2 +\bar v_2 w_1.$$
The  $\ii-$anti-commuting part of $(.,.)$ gives rise to a symplectic form $\omega$ on $\H^2.$ 
 Consider the bundle $L^\perp \subset V^*,$ where $V^*$ is a complex quaternionic bundle with respect to the complex structure $-i$. We can define a  family of  connections $(\nabla^{\perp})^{\mu},$ for $\mu \in \C_*$ on $L^{\perp}$ by 
 $$(\nabla^{\perp})^{\mu} = \nabla^{\perp} +  (\mu -1)\frac{1 + i S^\perp}{2}A_0^\perp + (\mu^{-1} -1)\frac{1- i S^\perp}{2}A_0^\perp.$$
This family is dual to the family of connections 
$$\tilde \nabla^{\mu} = \nabla +  (\mu -1)Q_0\frac{1-i S}{2} + (\mu^{-1} -1)Q_0\frac{1+ i S}{2},$$
with respect to $(.,.)$,
since $A^\perp = - Q^*$, $Q^\perp = -A^*$ and $\nu^\perp = - \nu^*.$

Both families of connections are flat if and only if the immersion is constrained Willmore. By duality of these families we have:
If $\eta$ is an eigenvalue of the holonomy of $(\nabla^\perp)^\mu$ then $\eta^{-1}$ is an eigenvalue of the holonomy of $\tilde \nabla^{\mu}.$ 
On other hand, we have for conformal immersions into $S^3$ that $L = L^\perp$ and $S = S^* = S^{\perp}$ and thus $A = - Q^*$. Therefore $(\nabla^{\perp})^\mu = \nabla^\mu.$ Further, $\nabla^{\mu}$ is gauge equivalent to $\tilde \nabla^{\mu}$. The gauge is given by 
$$\mathbbm g =\tfrac{1}{2} ((\mu + 1) - i(\mu -1)S).$$
This proves the lemma.
\end{proof}

For a fixed $x \in T^2$ consider the symplectic form $\omega$ on $V_x$ defined in Lemma \ref{symplectic}.
At a generic point $(\eta, \mu) \in \Sigma$ there exist  a basis of $\C^4$ such that  $H_x^\mu(\gamma)$ is given by 
$$H_x^\mu(\gamma) = \dvector{\eta & 0 & 0 & 0\\ 0 &\eta^{-1}&0&0\\ 0 &0&\tilde \eta& 0\\ 0 &0&0&\tilde \eta^{-1}}.$$
Let $\tilde {\mathcal L}_x \subset \Sigma \times \C^4$ be the line bundle which at generic points  $(\eta, \mu) \in \Sigma$ coincides with the eigenspace of $H^\mu_x(\gamma)$ to  the eigenvalue $\eta.$ Then the line bundle $\mathbbm g(\sigma^*\tilde {\mathcal L}_x)$ corresponds generically  to the eigenspace of the holonomy of $\tilde \nabla^{\mu}$ w.r.t. the eigenvalue $\eta^{-1}.$ The evaluation of $\omega$ on $\tilde {\mathcal L}_x \otimes \mathbbm g(\sigma^*\tilde {\mathcal L}_x)|_{(\eta, \mu)}$ is non-degenerate at generic points, where we have $4$ distinct eigenvalues, and thus it defines a holomorphic map 
$$\tilde {\mathcal L}_x \otimes \mathbbm g(\sigma^*\tilde {\mathcal L}_x )\rightarrow \C.$$ This map vanishes, if and only if the lines $\tilde {\mathcal L}_x $ and $\mathbbm g(\sigma^* \tilde {\mathcal L}_x) $ coalesce at $(\eta, \mu)$. This is  independent on $x$. The bundles $\mathbbm g(\sigma^* \tilde {\mathcal L}_x) $ and $\sigma^* \tilde {\mathcal L}_x$  are holomorphic isomorphic. Therefore we obtain
 \begin{Lem}\label{omega}
 The bundle $\tilde {\mathcal L}_x \otimes \sigma^*\tilde {\mathcal L}_x$ is independent of $x \in T^2$ as a complex holomorphic line  bundle.
 \end{Lem}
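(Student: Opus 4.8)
\emph{Proof proposal.} The plan is to realise $\tilde{\mathcal L}_x \otimes \sigma^*\tilde{\mathcal L}_x$, up to isomorphism, as the dual of the divisor bundle of a canonical section built from $\omega$, and then to check that this divisor is independent of $x$. The key observation, already implicit in Lemma \ref{symplectic}, is that the holonomy $\tilde H$ of $\tilde\nabla^{\mu}$ is the inverse $\omega$-adjoint of the holonomy $H = H_x^{\mu}(\gamma)$ of $\nabla^{\mu}$, because $\tilde\nabla^{\mu}$ is dual to $\nabla^{\mu}$ with respect to $(.,.)$ and hence to $\omega$. Thus for eigenvectors $v$ of $H$ and $w$ of $\tilde H$ with eigenvalues $\eta$ and $\eta'$ one has $(\eta\eta' - 1)\,\omega(v,w) = 0$, so $\omega(v,w)$ vanishes unless $\eta' = \eta^{-1}$. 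Since $\tilde{\mathcal L}_x$ is the $\eta$-eigenline of $H$ and $\mathbbm g(\sigma^*\tilde{\mathcal L}_x)$ is the $\eta^{-1}$-eigenline of $\tilde H$, the non-degeneracy of $\omega$ on $V_x$ forces these two lines to pair nontrivially wherever $H$ has four distinct eigenvalues: $\tilde{\mathcal L}_x$ is $\omega$-orthogonal to every $\tilde H$-eigenline except $\mathbbm g(\sigma^*\tilde{\mathcal L}_x)$, so the pairing with the latter cannot vanish. Evaluating $\omega$ therefore yields a holomorphic section $s_x$ of $(\tilde{\mathcal L}_x \otimes \mathbbm g(\sigma^*\tilde{\mathcal L}_x))^*$ which is nonzero at every generic point.

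Next I would analyse the zero divisor $D = (s_x)$. By the above, $s_x$ can vanish only where the two eigenlines coalesce, which happens exactly at the points of $\Sigma$ over which eigenvalues of the holonomy collide. As the holonomies at different base points are conjugate, their eigenvalues — and hence the entire ramification pattern of the cover $\Sigma \to \C P^1$ — are independent of $x$. The main point to nail down, and the step I expect to require the most care, is that not merely the support but also the multiplicities of $D$ are $x$-independent: one has to argue that the order of vanishing of $\omega$ at each collision point is governed only by the local structure of the spectral cover near that point, which is intrinsic to $\Sigma$, together with the $x$-independent behaviour at the two extra points $0$ and $\infty$. Granting this, $D$ is independent of $x$, so $(\tilde{\mathcal L}_x \otimes \mathbbm g(\sigma^*\tilde{\mathcal L}_x))^* \cong \mathcal{O}(D)$ and therefore $\tilde{\mathcal L}_x \otimes \mathbbm g(\sigma^*\tilde{\mathcal L}_x) \cong \mathcal{O}(-D)$ is independent of $x$ as a holomorphic line bundle.

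Finally I would remove the gauge factor. The endomorphism $\mathbbm g = \tfrac12((\mu+1) - i(\mu-1)S)$ acts as multiplication by $\mu$ on the $(+i)$-eigenspace of $S$ and by $1$ on the $(-i)$-eigenspace, hence is an invertible, $\mu$-holomorphic bundle map for $\mu \neq 0,\infty$. Its restriction to the eigenline $\sigma^*\tilde{\mathcal L}_x$ is thus a holomorphic isomorphism onto its image, so $\sigma^*\tilde{\mathcal L}_x \cong \mathbbm g(\sigma^*\tilde{\mathcal L}_x)$ as holomorphic line bundles over $\Sigma$. Combining this with the previous step gives $\tilde{\mathcal L}_x \otimes \sigma^*\tilde{\mathcal L}_x \cong \tilde{\mathcal L}_x \otimes \mathbbm g(\sigma^*\tilde{\mathcal L}_x) \cong \mathcal{O}(-D)$, which is independent of $x \in T^2$, as claimed.
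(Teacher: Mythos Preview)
Your argument is essentially the paper's own: the discussion immediately preceding the lemma constructs exactly the section $s_x$ of $(\tilde{\mathcal L}_x\otimes\mathbbm g(\sigma^*\tilde{\mathcal L}_x))^*$ via $\omega$, observes that it vanishes precisely where the two eigenlines coalesce (an $x$-independent condition), and then passes from $\mathbbm g(\sigma^*\tilde{\mathcal L}_x)$ to $\sigma^*\tilde{\mathcal L}_x$ by the holomorphic isomorphism given by $\mathbbm g$. The one point you flag --- that the \emph{multiplicities} of $(s_x)$, not just the support, must be $x$-independent --- is not spelled out in the paper either, but it closes immediately: the degree of $\tilde{\mathcal L}_x$ is constant in $x$ (an integer varying continuously on the connected torus $T^2$), so $\deg(s_x)$ is a fixed integer $d$; the multiplicities at the finitely many possible zeros are then nonnegative integers summing to $d$ and varying continuously in $x$, hence each is constant.
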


 \subsection{The reconstruction of a constrained Willmore immersion}
 Let $\Sigma$ be a connected compact Riemann surface with an anti-holomorphic involution $\rho.$ Further fix a real subtorus $Z = \Psi(T^2)$ of dimension $0,1$ or $2$ of the Jacobian of $\Sigma.$ For $x \in T^2$ the line bundle $\tilde {\mathcal L}_x$ over $\Sigma$ is by construction a complex holomorphic line subbundle of $\Sigma \times \C^4.$ Thus it defines a map from $\Sigma$ to $\C P ^3.$ A  quaternionic structure on $\C^4$ is a real linear endomorphism $\jj$ with $\jj^2 = -1$ anti-commuting with $i$.  By fixing such a quaternionic structure $\jj$ on $\Sigma \times \C^4$ we obtain a canonical  isomorphism between $\C^4$ and $\H^2.$ This isomorphism induces a map $\pi_{\H}$ between  $\CP^3$ and $\H P^1$ which is called twistor projection. In our case the quaternionic structure is given by $\rho.$ The main theorem for the reconstruction of a conformally immersed torus is the following.
\begin{The}[\cite{BLPP},Theorem 4.2]\label{Immersionreconstruction}
Let $f: T^2 \rightarrow S^3$ be a conformal immersion whose spectral curve $\Sigma$ has finite genus. Then there exist a map
\begin{equation}
 F : T^2 \times \Sigma \rightarrow \C P^3, 
\end{equation}
such that 
\begin{itemize}
\item $F(x, - ): \Sigma \rightarrow \C P^3$ is an algebraic curve, for all $x \in T^2.$\\
\item The original conformal immersion $f : T^2 \rightarrow S^3$ is obtained by the twistor projection of the evaluation of $F$ at the points at infinity:
$$f = \pi_{\H} F(-, 0) = \pi_{\H} F(-, \infty).$$
\\
\end{itemize}
\end{The}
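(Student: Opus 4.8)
The plan is to construct the map $F$ explicitly from the family of eigenline bundles $\tilde{\mathcal L}_x$ and then verify the two stated properties. First I would recall that for each fixed $x \in T^2$ the bundle $\tilde{\mathcal L}_x \subset \Sigma \times \C^4$ is a holomorphic line subbundle of the trivial $\C^4$-bundle, hence the fiber inclusion $\tilde{\mathcal L}_x|_p \hookrightarrow \C^4$ at each $p \in \Sigma$ defines a point of $\CP^3$. Setting $F(x,p) = [\tilde{\mathcal L}_x|_p]$ gives the candidate map, and since $\tilde{\mathcal L}_x$ depends holomorphically on $p$, the restriction $F(x,-)\colon \Sigma \to \CP^3$ is an algebraic (holomorphic) curve for each $x$, which is the first bullet. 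The content of this step is just that a holomorphic line subbundle of a trivial bundle is the same data as a map to projective space, so this is essentially formal.

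Next I would analyze the behaviour at the two marked points $0$ and $\infty$, which by the discussion above are the points added to $\hat\Sigma$ to compactify it, corresponding to the ends of the spectral curve of \cite{BLPP}. The key point is to identify $F(x,0)$ and $F(x,\infty)$ geometrically. Over these points the eigenline $\tilde{\mathcal L}_x$ extends holomorphically (the bundle is defined over all of $\Sigma$), and I would use the fact that at the ends the holonomy data degenerates in a controlled way so that the twistor projection $\pi_\H$ of the evaluation recovers the original line bundle $L = f^*\mathcal T$. Concretely, I expect that at $\mu = 0$ and $\mu = \infty$ the associated family $\nabla^\mu$ degenerates to data directly encoding the conformal immersion $f$, and that the quaternionic structure $\jj = \rho$ identifies the complex eigenline with the quaternionic line $L_x$ whose twistor projection is $f(x)$.

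The verification that $f = \pi_\H F(-,0) = \pi_\H F(-,\infty)$ is where the real work lies. The equality of the two evaluations should follow from the symmetry $\nabla^{\bar\mu^{-1}} = j^{-1}\nabla^\mu j$ of the family, which interchanges $\mu = 0$ and $\mu = \infty$ and under which $\rho$ covers $\mu \mapsto \bar\mu^{-1}$; thus $\rho$ swaps the fibers over $0$ and $\infty$, and applying the twistor projection (built from the same $\jj = \rho$) makes the two projections coincide. The identification of the common value with $f$ itself requires tracing through the definition of $\nabla^\mu$ in terms of $\hat\nabla$, $A_0$, and $S$, and checking that the limiting eigenline at the end reproduces $L = f^*\mathcal T$.

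The hard part will be this last identification: showing that the abstract eigenline bundle of the holonomy, evaluated at the ends and pushed through the twistor projection, literally reconstructs the geometric line bundle $L$ of the immersion rather than some gauge- or dressing-transformed variant. This is precisely where the structure of \cite{BLPP}, Theorem 4.2 is invoked, and I would expect the proof to either cite that theorem directly for the analytic content or to reduce to it by matching the present spectral data (the $SL(4,\C)$ associated family and its holonomy) with the multiplier/holonomy data used there, checking that the two marked points and the involutions $\rho,\sigma$ correspond correctly. Since the statement is attributed to \cite{BLPP}, the cleanest route is to show that the associated family $\nabla^\mu$ and its eigenline bundles satisfy the hypotheses of that theorem and then quote it, reserving genuine computation only for the compatibility of the marked points with the ends $0,\infty$.
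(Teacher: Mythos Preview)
The paper does not give a proof of this theorem at all: it is stated with the attribution \cite{BLPP}, Theorem~4.2, and then used as a black box. There is therefore nothing in the paper to compare your argument against; the ``paper's own proof'' is simply the citation.

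Your sketch is a reasonable heuristic for why such an $F$ should exist, and you are right that the map $F(x,-)$ arises from the inclusion $\tilde{\mathcal L}_x \subset \Sigma \times \C^4$ viewed as a map to $\CP^3$; this is exactly what the paragraph preceding the theorem in the paper sets up. You also correctly identify, in your last paragraph, that the honest route is to invoke \cite{BLPP} rather than to reprove it. One point to be careful about: the theorem as stated applies to any conformal immersion with finite spectral genus, whereas your outline leans on the constrained Willmore associated family $\nabla^\mu$ and its holonomy. The spectral curve in \cite{BLPP} is defined via multipliers of holomorphic sections of the quaternionic line bundle $V/L$, not via an $SL(4,\C)$ family of flat connections; the identification of the two curves (up to the two added points $0,\infty$) is itself a result of \cite{B} that the present paper only quotes. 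So your sketch, taken literally, would prove a narrower statement than the one written, and the ``hard part'' you flag --- matching the eigenline at the ends with $L$ --- is genuinely nontrivial analysis carried out in \cite{BLPP} and \cite{B}, not something one can fill in from the data in this paper alone.
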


For given $\Sigma$ with marked points $0$ and $\infty$ and a $T^2-$family of holomorphic line bundles $Z$ in Jac$(\Sigma)$ the map $F$ is in general not unique. In other words, the  immersion $f$ is in general not uniquely determined by the  spectral curve  and $Z$. \\

Let $E$ be a complex holomorphic line bundle over a Riemann surface $M.$ By the Kodaira embedding theorem there exist a holomorphic map $s$ from $E$ to $\C P^n$ if and only if the space of holomorphic sections of the line bundle $E^*$ is at least complex $(n+1)-$dimensional, i.e., deg $(E^*) \geq n+1$. The space of holomorphic sections of $E^*$ is exactly $(n+1) -$dimensional if and only if the map $s$ is unique up to a $PSL(n+1,\C)$ action on $\CP^n$. In our case  we have $n = 4$ and the line bundles in $Z$ must be compatible with the quaternionic structure $\jj.$ Elements of $PSL(4, \C)$ compatible with $\jj$ act on $\H P^1$ as M\"obius transformations. Thus  $f = \pi_{\H}(F(-, 0))$ is uniquely determined up to M\"obius transformations of $S^4$ if and only if the space of holomorphic sections of $\tilde {\mathcal L}_x$ is $4-$dimensional. \\

\begin{Def}
A conformal immersion is simple if it is uniquely determined by $\Sigma$ and $Z$ up to M\"obius transformations of $S^4.$  \end{Def}

Instead of the quaternionic line bundle $L$, we can also consider the quaternionic line bundle $V/L$. Let $S$ denote the conformal Gau\ss $ $ map. 
The property $f(p) \in S_p$ translated into the quaternionic language is   $S_p L_p \subset L_p.$ 
Thus the projection of $S$ to $V/L$ defines a complex structure and the projection of $\hat \nabla''$ defines a holomorphic structure on $V/L$. By definition the projection of a constant sections of $V$ to $V/L$ is holomorphic.  The map $f: T^2 \rightarrow S^3$ is then given by the quotient of these sections, since
$$\pi_{V/L}(0,1) + \pi_{V/L}(1,0) f = 0_{V/L}.$$ The next Lemma states that the only holomorphic sections of $V/L$ comes from the constant sections of $V,$ if $f$ is simple. In this case the immersion is uniquely determined by its quotient bundle up to M\"obius transformations of $S^4.$

\begin{Lem}\label{h0vl}
Let $f: T^2 \rightarrow S^3$ be a simple conformal immersion and $V/L$ be the associated quotient bundle, then  the space $H^0(V/L)$ is quaternionic $2-$dimensional.
 \end{Lem}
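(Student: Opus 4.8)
The plan is to prove $\dim_{\H}H^0(V/L)=2$ by combining a lower bound that holds for every non-constant immersion with an upper bound into which simplicity genuinely enters. First I would record the structure on $V/L$: its complex structure is the projection of $S$ and its holomorphic structure $D$ is the projection of $\hat\nabla''$. Since $D$ commutes with the quaternionic structure $j$ (right multiplication by $\jj$) in the sense $D(\psi q)=(D\psi)q$ for constant $q\in\H$, the space $H^0(V/L)=\ker D$ is a right $\H$-module, so $\dim_{\C}H^0(V/L)=2\dim_{\H}H^0(V/L)$ and it suffices to show $\dim_{\C}H^0(V/L)=4$. For the lower bound, the constant sections of $V=T^2\times\H^2$ form a quaternionic $2$-dimensional space; being $\hat\nabla$-parallel they are annihilated by $\hat\nabla''$, so their projections are holomorphic sections of $V/L$. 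The resulting quaternionic-linear map $\H^2\to H^0(V/L)$ is injective: a constant section $\psi$ lies in its kernel exactly when $\psi(x)\in L_x$ for all $x$, and then $\psi\H=L_x$ would be independent of $x$, contradicting that $f$ is a non-constant immersion. Hence $\dim_{\H}H^0(V/L)\ge 2$.

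For the upper bound I would read $f$ as the map associated with $V/L$ and its two distinguished sections. The identity $\pi_{V/L}(0,1)+\pi_{V/L}(1,0)f=0_{V/L}$ exhibits $f$ as the quaternionic Kodaira map of $V/L$ determined by $\pi_{V/L}(1,0)$ and $\pi_{V/L}(0,1)$. Arguing exactly as in the complex Kodaira discussion preceding the lemma, but now applied to the quaternionic line bundle $V/L$, recovering $f$ up to the subgroup of $\PSL(4,\C)$ compatible with $\jj$ (that is, up to Möbius transformations of $S^4$) forces $H^0(V/L)$ to be no larger than quaternionic $2$-dimensional; any further holomorphic section would enlarge the target $\H P^n$ and make the presentation of $f$ ambiguous.

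The main work, and the crux of the argument, is to match this reconstruction from the quotient bundle over $T^2$ with the reconstruction from $(\Sigma,Z)$ in Theorem~\ref{Immersionreconstruction}, so that simplicity transfers. I would establish a complex-linear isomorphism between $H^0(V/L)$ and the space of holomorphic sections of $\tilde{\mathcal L}_x$ on $\Sigma$ that produces the algebraic curve $F(x,-)\colon\Sigma\to\C P^3$. Concretely, evaluating the homogeneous components of $F$ at the marked point $0$ (equivalently $\infty$) yields, as functions of $x\in T^2$, lifts to $V$ of holomorphic sections of $V/L$, and over $\mu=1$ these evaluations are precisely the $\hat\nabla$-parallel, hence constant, sections of $V$. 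Under this correspondence $\dim_{\C}H^0(V/L)$ equals $h^0(\tilde{\mathcal L}_x)$, which by the characterization preceding the lemma equals $4$ precisely when $f$ is simple. Combining with the lower bound gives $\dim_{\C}H^0(V/L)=4$, i.e. $\dim_{\H}H^0(V/L)=2$.

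I expect the genuine difficulty to lie entirely in this last identification, since $V/L$ lives over $T^2$ while $\tilde{\mathcal L}_x$ and $F(x,-)$ live over the spectral curve $\Sigma$. The delicate points are the behaviour of the reconstruction at the marked points $0,\infty$, the ends of $\hat\Sigma$, where the $\nabla^{\mu}$-parallel sections degenerate into genuine sections of $V$, and the verification that the evaluation map is a bijection rather than a mere injection. It is exactly here that one must invoke the exact dimension $4$ furnished by simplicity, rather than only the lower bound coming from the constant sections.
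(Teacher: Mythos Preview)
Your lower bound is fine and matches the paper's setup. The upper bound, however, is where your proposal diverges from the paper and runs into trouble.

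The paper's argument is much shorter and avoids the identification you attempt. It argues by contradiction directly on $V/L$: if $\dim_{\H}H^0(V/L)\geq 3$, pick two different quaternionic $2$-planes in $H^0(V/L)$ and use each as a linear system to produce maps $f,\tilde f\colon T^2\to\H P^1$. These are not M\"obius equivalent (M\"obius transformations act by $\GL(2,\H)$ within a fixed $2$-plane of sections), yet by construction their quotient bundles $V/L$ and $V/\tilde L$ are holomorphically isomorphic as quaternionic line bundles. Since the multiplier spectral curve and the eigenline map of \cite{BLPP} are invariants of this isomorphism class, $f$ and $\tilde f$ share the same $(\Sigma,Z)$, contradicting simplicity. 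No passage through $\tilde{\mathcal L}_x$ is needed.

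Your route, by contrast, tries to identify $H^0(V/L)$ with $H^0(\tilde{\mathcal L}_x^*)$, and this is where there is a genuine gap. The natural linear system giving $F(x,-)\colon\Sigma\to\C P^3$ is the restriction map $V_x^*\to H^0(\tilde{\mathcal L}_x^*)$: the four ``homogeneous components'' you refer to are linear coordinates on the fixed fibre $V_x\cong\C^4$, not global sections of anything over $T^2$. So simplicity says $V_x^*\cong H^0(\tilde{\mathcal L}_x^*)$, which tells you nothing directly about $H^0(V/L)$. Moreover, your concrete proposal to evaluate $F$ at the marked point $0$ cannot produce sections of $V/L$: by Theorem~\ref{Immersionreconstruction} one has $F(x,0)\in L_x$, so the projection to $V/L$ is identically zero. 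The link you need between the two reconstructions goes the other way (it is exactly what the paper establishes \emph{using} this lemma in the proof of Theorem~\ref{CMCs3}), so invoking it here would be circular.
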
 
\begin{proof}

If $H^0(V/L)> 2$ then there exist at least $3$ quaternionic linear independent holomorphic sections. The quotient of any two of them yield a map from $T^2$  to $\H P ^1.$ Thus in this case we get at least two maps $f$ and $\tilde f$ which are not M\"obius equivalent. The corresponding quotient  bundles $V/L$ and $V/\tilde L$ are holomorphic isomorphic. Thus $f$ and $\tilde f$ have the same spectral curve and $Z$ and the map $F$ cannot be unique.  
\end{proof}

\begin{The}\label{CMCs3}
Let $f: T^2 \rightarrow S^3$ be a simple constrained Willmore immersion with spectral curve $\Sigma$. On $\Sigma$ we have a fix point free anti-holomorphic involution \linebreak $\rho : (\eta, \mu) \mapsto (\bar \eta, \bar\mu^{-1})$ and an involution $\sigma : (\eta, \mu) \mapsto (\eta^{-1}, \mu)$. If the quotient $\Sigma/\sigma \cong \C P^1$ and $\rho \circ \sigma$ has fixpoints, then $f$ is a CMC torus in a space form.
\end{The}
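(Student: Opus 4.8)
My plan is to manufacture a CMC immersion $\tilde f$ directly out of the spectral data and then let simplicity do the rest: since $f$ is simple it is determined by $(\Sigma, Z)$ up to M\"obius transformations, so it suffices to exhibit \emph{one} CMC torus carrying these data. The first step is to recognize the CMC structure in the hypotheses. The condition $\Sigma/\sigma \cong \CP^1$ says exactly that $\Sigma$ is hyperelliptic with hyperelliptic involution $\sigma$; because $\sigma$ fixes $\mu$ and interchanges the eigenvalues $\eta \leftrightarrow \eta^{-1}$ (Lemma \ref{symplectic}), the four eigenvalues over a generic $\mu$ split into the two $\sigma$-orbits $\{\eta,\eta^{-1}\}$, $\{\tilde\eta,\tilde\eta^{-1}\}$, and the fourfold projection $\Sigma \to \CP^1_\mu$ factors as $\Sigma \to \Sigma/\sigma \cong \CP^1 \to \CP^1_\mu$. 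I would take the quotient $\Sigma/\sigma$ as the CMC spectral line $\CP^1_\lambda$, so that $\mu$ becomes a degree-two function of the CMC parameter $\lambda$; this is precisely the picture behind the correspondence of \cite{B} identifying the constrained Willmore spectral curve of a CMC torus with the harmonic-map spectral curve of its Gau\ss{} map. The fixpoints of $\rho\circ\sigma$ have $|\eta|=|\mu|=1$ and descend to real points of $\CP^1_\lambda$, providing the Sym points on $S^1$.

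\textbf{Producing the CMC reconstruction.} Next I would convert the rank-four eigenline bundle $\tilde{\mathcal L}_x \subset \Sigma\times\C^4$ into the rank-two spectral data required by Theorem \ref{hitchin}. Here the symplectic pairing of $\tilde{\mathcal L}_x$ with $\sigma^*\tilde{\mathcal L}_x$ (Lemmas \ref{symplectic} and \ref{omega}) plays the role of the form $\hat\omega$ of Section \ref{CMCteil}: it shows $\tilde{\mathcal L}_x\otimes\sigma^*\tilde{\mathcal L}_x$ is independent of $x$ and controls the degree. From this I would extract a family of line bundles $\mathcal L_x$ on $\Sigma$ and check the hypotheses of Theorem \ref{hitchin}, namely that $x\mapsto\mathcal L_x$ is a group homomorphism, that each $\mathcal L_x$ is non-special, that $\rho^*\mathcal L_x=\mathcal L_x\jj$, and that $\mathcal L_x^*\otimes\sigma^*\mathcal L_x^*$ descends to a bundle of degree $2p+2$ on $\CP^1_\lambda$ (with $\lambda=0,\infty$ the images of the marked points $0,\infty$, which must be branch points). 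Theorem \ref{hitchin} then delivers a CMC associated family of the reduced form (\ref{flat}) on a rank-two bundle $W$, and feeding the Sym points obtained from the fixpoints of $\rho\circ\sigma$ into the Sym--Bobenko formula (Theorem \ref{sym}) yields a CMC immersion $\tilde f$ in a space form, the ambient form being selected by the configuration of the Sym points.

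\textbf{Matching and conclusion.} By construction $\tilde f$ is assembled from $f$'s own spectral data, and by the correspondence of \cite{B} the constrained Willmore spectral curve and the family $Z=\Psi(T^2)$ of the CMC torus $\tilde f$ coincide with $\Sigma$ and $Z$. Since $f$ is simple, Theorem \ref{Immersionreconstruction} together with the definition of simplicity forces $f=\tilde f$ up to a M\"obius transformation of $S^4$. As M\"obius-equivalent surfaces are not distinguished, $f$ is therefore a CMC torus in a space form.

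\textbf{Main obstacle.} The technical heart is the middle step: faithfully descending the rank-four eigenbundle to genuine rank-two CMC data and verifying all hypotheses of Theorem \ref{hitchin}, in particular the degree identity $\mathcal L_x^*\otimes\sigma^*\mathcal L_x^*=L(2p+2)$ and the non-speciality, while keeping track of the two-to-one relation between $\mu$ and $\lambda$. The reality constraint $\rho^*\mathcal L_x=\mathcal L_x\jj$ is exactly where the hypothesis that $\rho\circ\sigma$ has fixpoints enters: without such fixpoints the reconstruction would only produce a formal or complexified CMC family rather than a surface in a real space form, which is precisely the Babich--Bobenko \cite{BaBob} phenomenon that the theorem must avoid.
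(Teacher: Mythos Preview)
Your overall plan---build a CMC torus $\tilde f$ from the spectral data and then invoke simplicity to identify $\tilde f$ with $f$---is exactly the paper's strategy. But two of the steps are mis-specified in ways that matter.

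\textbf{The reduction to rank two and the role of simplicity.} You say you will ``extract'' rank-two data $\mathcal L_x$ and then ``check'' the hypotheses of Theorem~\ref{hitchin}, in particular the degree identity with $2p+2$. The paper's mechanism is concrete and not quite what you wrote: one first uses Theorem~\ref{Immersionreconstruction} to reconstruct $L=\pi_{\H}F(-,\infty)$, and then \emph{projects} the eigenlines $\tilde{\mathcal L}_x\subset\Sigma\times\C^4$ to the rank-two bundle $V/L$. The degree of the resulting $\mathcal L_x^*$ is not an automatic $p+1$; it is forced to equal $g+1$ (geometric genus) by a Riemann--Roch argument that uses simplicity a second time. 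Namely, simplicity implies that the Kodaira map $\mathcal L_x\hookrightarrow\Sigma\times(V/L)_x\cong\Sigma\times\C^2$ is unique up to $PSL(2,\C)$, hence $h^0(\mathcal L_x^*)=2$; together with the lower bound $\deg\mathcal L_x^*\ge g+1$ coming from the zeros of $\hat\omega$ at branch points, Riemann--Roch gives non-speciality and $\deg=g+1$. In your outline simplicity appears only as a matching device at the very end; in fact it is already needed here to get the CMC spectral data on the nose (this is also why Corollary~\ref{simple} falls out as a by-product).

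\textbf{Locating the Sym points.} You identify the Sym points with the fixpoints of $\rho\circ\sigma$. This is not correct, and getting it right is where Lemma~\ref{h0vl} enters. The Sym points are the $\lambda$ for which $\nabla^\lambda$ is trivial; their parallel frames are exactly the holomorphic sections of $V/L$ with trivial monodromy. By Lemma~\ref{h0vl} (another consequence of simplicity) these are precisely the projections of constant sections of $V$, i.e.\ the $\hat\nabla=\nabla^{\mu=1}$-parallel sections. Hence the Sym points are the two $\lambda$-values lying over $\mu=1$, and nothing forces them to sit on the fixed circle $|\lambda|=1$. The three possible configurations---both on $S^1$, coincident, or interchanged by $\rho$ off $S^1$---give CMC in $S^3$, $\R^3$, $H^3$ respectively via Theorem~\ref{sym}. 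The fixpoint hypothesis on $\rho\circ\sigma$ is used earlier and for a different purpose: it pins down that $\rho$ descends to $\lambda\mapsto\bar\lambda^{-1}$ on $\Sigma/\sigma$ (rather than $\lambda\mapsto-\bar\lambda^{-1}$), so that the real structure on $\CP^1_\lambda$ is the one required by the CMC theory. Your ``main obstacle'' paragraph places this hypothesis at the reality condition $\rho^*\mathcal L_x=\mathcal L_x\jj$, which is not where it actually bites.
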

\begin{proof}
Let $f: T^2 \rightarrow S^3$ be a simple,  conformal and constrained Willmore immersion. Let $\nabla^{\mu}$ be  its associated family of complex flat connections of constrained Willmore surfaces on the bundle $V = T^2 \times \H^2 \cong T^2 \times \C^4.$ 
The spectral curve  $\Sigma$ is a $4-$fold covering of $\C P^1$  and has two marked points $0$ and $\infty$ corresponding to the ends of the spectral curve $ \hat \Sigma$. These points are by \cite{B} fixed under the involution $\sigma$ and interchanged by the involution $\rho.$ By assumption the quotient $\Sigma/\sigma$ is also $\C P^1.$ Thus the spectral curve $\Sigma$ is a hyperelliptic curve and $\sigma$ is the hyperelliptic involution.  Let $\lambda$ be a holomorphic coordinate of $\Sigma/\sigma$ such that $0 \in \Sigma$ is the point over $\lambda  =0$ and $\infty \in \Sigma$ is the point over $\lambda = \infty$. Since the involutions $\rho$ and $\sigma$ commute and $\rho$ interchanges the points $0$ and $\infty$ on $\Sigma$, $\rho$ also interchanges the points $\lambda = 0$ and $\lambda = \infty$ on $\Sigma/\sigma.$ An anti-holomorphic involution on $\C P^1$  interchanging $\lambda = 0$ and $\lambda = \infty$ is either the map $\lambda \mapsto \bar \lambda^{-1}$ or the map $\lambda \mapsto - \bar \lambda^{-1}.$ Since $\rho \circ \sigma$ has fixed points, $\rho$ induces the involution $\lambda \mapsto \bar \lambda^{-1}$ on $\Sigma/\sigma$, which fixes the points over $\lambda \in S^1.$ 

By assumption there exist a $T^2-$family  of complex holomorphic line subbundles $\tilde {\mathcal L}_x$ of $\tilde V = \Sigma \times \C^4.$ Therefore we obtain byâ Theorem \ref{Immersionreconstruction} there is a map
$$F:  T^2 \times \Sigma \rightarrow \C P^3, $$
such that  the line bundle $L$ corresponding to the immersion $f$ can be reconstructed by $L = \pi_{\H}F(-, \infty).$
 Thus we can define the quotient bundle $V/L$ and the projection of $\tilde {\mathcal L}_x, $ $x \in T^2,$ to $V/L$ defines a $T^2-$family of complex  line subbundles $\mathcal L_x$ of the topologically trivial complex rank $2$ bundle $V/L.$ 
We want to show that for fixed $x \in T^2$ the  bundle $\mathcal L_x^*$ has degree $g+1.$
By \cite{BLPP} the map $$\Psi: T^2 \rightarrow Jac(\Sigma), x \mapsto \mathcal L_x \mathcal L_{x_0}^{-1}$$ is a group homomorphism for fixed $x_0 \in T^2.$  Thus the degree of $\mathcal L_x$ is constant in $x$. Moreover, since $f$ is simple, the map $F$ is unique up to M\"obius transformations of $S^4.$ Thus the complex holomorphic bundle $\pi_L(F(x,-)) = \mathcal L_x \subset  \Sigma \times(V/L)_x $ is unique up to M\"obius transformations of $\C P^1$ and therefore the space of holomorphic sections of $\mathcal L_x^*$ is  $2-$dimensional by the Kodaira embedding theorem.  

Let $\widetilde{(.,.)}$ denote the parallel Hermitian form on $V/L$ with respect to $\pi_{v/L}(\hat\nabla).$ Then the $\ii-$anti-commuting part of  $\widetilde{(.,.)}$ defines a symplectic structure $\hat \omega$ on $V/L$. The evaluation of $\hat \omega$ on $\mathcal L_x \otimes \sigma^*\mathcal L_x$ is non-degenerate at generic points on $\Sigma$ and vanishes at the branch points of $\sigma.$
Thus the degree $d$ of $\mathcal L^*_x$ satisfies  $(g+1) \leq d$, where $g$ is the genus of $\Sigma.$  By the Riemann-Roch theorem we thus obtain that the line bundle $\mathcal L_x^*$  is  non-special and deg$(\mathcal L_x^*) = g+1.$
Using Theorem \ref{hitchin}, we can therefore define  a family of flat connections $\nabla^\lambda,$ $\lambda \in \C_*$ on $V/L$ of the form
$$ \nabla^{\lambda} = \nabla + \tfrac{1}{2}(1+ \lambda^{-1} ) \alpha' + \tfrac{1}{2}(1+ \lambda)\alpha'',$$
for which  $(\pi_{V/L}(F(-, h)), \pi_{V/L}(F(-, \sigma(h)))$ define a parallel frame of the complex rank two bundle $V/L$. \\

The section $\psi_h = \pi_{V/L}(F(-, h))$ is by definition a holomorphic section with monodromy of $V/L.$ Since $f$  is simple, all holomorphic  sections of $V/L$ with trivial monodromy are given by the projection of constant sections of $V$ by Proposition \ref{h0vl}.   Constant sections of $V$ are parallel sections of $\hat \nabla = \nabla^{\mu = 1}, $ where $\nabla^{\mu}$ is the constrained Willmore associated family of flat connections. This is a complex $4$ dimensional space. Thus there exist at most $2$ trivial connections in the $\nabla^{\lambda}$ family of flat connections. Let $\lambda_0$ and $\lambda_1$ denote the points of $\Sigma/\sigma$ over $\mu = 1$. There are three cases to consider.

In the first case both points $\lambda_0 \neq \lambda_1,$  are fixed under the involution $\rho \circ \sigma,$ then $\lambda_{0}$ and $\lambda_1$  has  length $1.$
Thus the reconstruction by the Sym-Bobenko formula stated in Theorem \ref{sym} gives a CMC immersion in $S^3$.
In the second case $\lambda_0 = \lambda_1$ and we obtain a CMC immersion in $\R^3.$ 
In the third case we have again $\lambda_ 0 \neq \lambda_1,$ but both points are not fixed under $\rho \circ \sigma.$ The reconstruction by the Sym-Bobenko formula we obtain a CMC immersion in $H^3.$ 

It remains to show that the immersion reconstructed by the Sym-Bobenko formula $\tilde f$ is M\"obius equivalent to the original immersion $f.$ This holds since by construction the quotient bundles $V/\tilde L$ and $V/L$ are holomorphic isomorphic. 
\end{proof}

\begin{Cor}\label{simple}
A conformally immersed CMC torus in a $3-$dimensional space form is simple if and only if its arithmetic spectral genus $p$ equals its geometric spectral genus $g$. 
\end{Cor}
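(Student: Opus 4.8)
The plan is to translate the geometric notion of simpleness into a single cohomological condition on the eigenline bundle and then read off the biconditional from Riemann--Roch. Since $f$ is CMC, its constrained Willmore spectral curve agrees with the harmonic map spectral curve of its Gau\ss{} map, so the reconstruction of Theorem \ref{Immersionreconstruction} takes place on the hyperelliptic curve $\Sigma$ of geometric genus $g$, via the eigenline bundle $\mathcal{L}_x \subset \Sigma \times \C^2$ of the rank-two bundle $V/L$. The map $\Sigma \to \CP^1$ recovering the immersion is the one associated to the two tautological sections of $\mathcal{L}_x^*$ coming from the inclusion $\mathcal{L}_x \hookrightarrow \Sigma\times\C^2$. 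By the Kodaira criterion recorded before the definition of simpleness and already applied in the proof of Theorem \ref{CMCs3}, this map --- and hence $f$ --- is unique up to the $\PSL(2,\C)$-action realizing the M\"obius group of $S^3$ exactly when $H^0(\Sigma,\mathcal{L}_x^*)$ is two-dimensional. So the first step is the reformulation: $f$ is simple if and only if $h^0(\mathcal{L}_x^*)=2$.

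Next I would invoke the two facts about the CMC eigenline bundle supplied by \cite{Hi}: that $\deg\mathcal{L}_x^* = p+1$, where $p$ is the arithmetic spectral genus, and that $\mathcal{L}_x^*$ is non-special, i.e. $h^1(\Sigma,\mathcal{L}_x^*)=0$. Riemann--Roch on the genus-$g$ curve $\Sigma$ then gives the identity $h^0(\mathcal{L}_x^*) = \deg\mathcal{L}_x^* - g + 1 = (p+1)-g+1 = p-g+2$, valid for \emph{every} CMC torus regardless of simpleness. Combining this with the reformulation of the first step yields at once that $f$ is simple if and only if $p-g+2=2$, that is, if and only if $p=g$, which is the assertion.

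The conceptual content --- and the point I would take care to justify --- is precisely the non-speciality of $\mathcal{L}_x^*$, equivalently the claim that $h^0(\mathcal{L}_x^*)=2+(p-g)$ with no further correction. I would establish this through the normalization $\nu\colon\Sigma\to\tilde\Sigma$, where $\tilde\Sigma$ is the singular arithmetic-genus-$p$ curve and $\delta=p-g$ is its number of nodes (the delta invariant). On $\tilde\Sigma$ the eigenline data is non-special of degree $p+1$, so Riemann--Roch on the singular curve produces exactly two sections; pulling back by $\nu^*$ drops one gluing condition at each node, giving $h^0(\Sigma,\mathcal{L}_x^*)=h^0(\tilde\Sigma,\tilde{\mathcal{L}}_x^*)+\delta = 2+(p-g)$. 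The main obstacle is verifying that the $\delta$ node conditions are genuinely independent --- equivalently that the pulled-back bundle remains non-special on $\Sigma$ --- so that $h^0$ jumps by the full $\delta$; this is exactly where the eigenline geometry enters, since the two branches over each node carry distinct eigenlines of the holonomy. Granting this, the chain ``$f$ simple $\Longleftrightarrow h^0(\mathcal{L}_x^*)=2 \Longleftrightarrow \delta=0 \Longleftrightarrow p=g$'' finishes the proof, and the forward direction is consistent with Theorem \ref{CMCs3}, where a simple constrained Willmore torus was forced to satisfy $\deg\mathcal{L}_x^*=g+1$, matching $p+1$ precisely when $p=g$.
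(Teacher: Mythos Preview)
Your approach is essentially the paper's: translate simpleness into the condition $h^0(\mathcal L_x^*)=2$ via Kodaira, feed in $\deg\mathcal L_x^*=p+1$ from Hitchin, and let Riemann--Roch on the genus-$g$ curve $\Sigma$ do the rest. The paper's written argument is terser and only spells out the direction $p>g\Rightarrow h^0(\mathcal L_x^*)\geq 3\Rightarrow$ not simple, whereas you also supply the converse (which needs the non-speciality input, trivially available when $p=g$ since then $\Sigma=\tilde\Sigma$ and Hitchin's result applies verbatim); your extra normalization discussion establishing $h^0=2+(p-g)$ for arbitrary $p$ is more than the corollary strictly requires, but it is the right picture.
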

\begin{proof}
The degree of the line bundle $\mathcal L_x$ is by definition $p+1.$ If $p>g$ we obtain by Riemann-Roch that  the space of holomorphic sections of $\mathcal L_x^*$ is at least $3-$dimensional. Therefore  the map $F$ cannot be unique by the Kodaira embedding theorem. 
\end{proof}

\begin{Cor}
Let $f$ be a constrained Willmore conformal immersion in $S^3$ with even spectral genus and $\Sigma/\sigma \cong \C P^1,$ then  
the involution $\rho \circ \sigma$ has fixed points. In particular, if $f$ is simple then it is CMC in a space form.
\end{Cor}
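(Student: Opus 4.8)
The plan is to reduce the corollary to the single statement that the anti-holomorphic involution $\rho\circ\sigma$ has a fixed point; the last clause then follows at once from Theorem~\ref{CMCs3}, whose remaining hypotheses ($\Sigma/\sigma\cong\CP^1$ and simplicity of $f$) are exactly those assumed here. Since $\Sigma/\sigma\cong\CP^1$, the curve $\Sigma$ is hyperelliptic with $\sigma$ as hyperelliptic involution; I would fix the coordinate $\lambda$ on $\Sigma/\sigma\cong\CP^1$ as in the proof of Theorem~\ref{CMCs3} and a hyperelliptic model $y^2=R(\lambda)$, with $\sigma\colon y\mapsto-y$. Because the marked points $0,\infty\in\Sigma$ are fixed by $\sigma$, the values $\lambda=0,\infty$ are branch values, so $R$ has a simple zero at $0$ and odd degree $\deg R=2g+1$. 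The involution $\rho$ descends to an anti-holomorphic involution $\bar\rho$ of $\CP^1$ interchanging $0$ and $\infty$, hence $\bar\rho(\lambda)=\varepsilon\bar\lambda^{-1}$ with $\varepsilon\in\{+1,-1\}$; this sign cannot be removed by rescaling $\lambda$ and is the only quantity left to determine. First I would show that even spectral genus forces $\varepsilon=+1$, and then extract the fixed point of $\rho\circ\sigma$ from the freeness of $\rho$.

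The crux is a parity computation for the lift, and this is the step where evenness of $g$ enters. Since the branch divisor is $\bar\rho$-invariant, reality of $R$ forces every lift of $\bar\rho$ to $\Sigma$ to have the form
\[
\rho(\lambda,y)=\bigl(\varepsilon\bar\lambda^{-1},\,c\,\bar\lambda^{-(g+1)}\bar y\bigr),\qquad |c|=1,
\]
the two lifts differing by $\sigma$. A direct computation (using $\varepsilon^{-1}=\varepsilon$ and $|c|=1$) should then give
\[
\rho^{2}(\lambda,y)=\bigl(\lambda,\ \varepsilon^{\,g+1}\,y\bigr),
\]
so that $\rho^{2}=\mathrm{id}$ exactly when $\varepsilon^{g+1}=1$ and $\rho^{2}=\sigma$ when $\varepsilon^{g+1}=-1$. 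For $\varepsilon=-1$ and $g$ even one has $\varepsilon^{g+1}=-1$, whence $\rho^{2}=\sigma\neq\mathrm{id}$ and, by the same identity applied to the other lift, \emph{neither} lift of $\bar\rho$ is an involution. But $\rho$ is by construction an anti-holomorphic involution (it comes from the quaternionic structure $j$ with $j^{2}=-1$, and is the fixed-point-free involution of Theorem~\ref{CMCs3}). Hence $\varepsilon=+1$, i.e.\ $\bar\rho(\lambda)=\bar\lambda^{-1}$, whose fixed-point set on $\CP^1$ is the circle $\{|\lambda|=1\}$. (For $\varepsilon=+1$ the same formula gives $\rho^{2}=\mathrm{id}$ for every $g$, consistent with the existence of CMC tori in all genera, while the excluded case $\varepsilon=-1$ with $g$ odd is precisely the Babich--Bobenko situation.)

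Next I would locate a fixed point of $\rho\circ\sigma$ over the circle $\{|\lambda|=1\}$. No branch value of $\Sigma\to\Sigma/\sigma$ can lie on this circle: a branch value has a single preimage in $\Sigma$, which is fixed by \emph{every} lift of $\bar\rho$, in particular by $\rho$, contradicting that $\rho$ is fixed-point-free. Therefore over each point of $\{|\lambda|=1\}$ the fibre consists of two distinct points $\{P,\sigma P\}$, and $\rho$ maps this fibre to itself. Having no fixed point, $\rho$ cannot fix either sheet, so $\rho(P)=\sigma P$ along the whole circle; equivalently $\rho\circ\sigma$ fixes both $P$ and $\sigma P$. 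Thus the fixed-point set of $\rho\circ\sigma$ contains the preimage of $\{|\lambda|=1\}$ and is in particular non-empty. (The underlying mechanism is the standard one for real hyperelliptic curves: the two lifts partition the fibres over the real circle, where $\rho$ fixes a sheet $\rho\sigma$ swaps it and conversely; the relevant sign is locally constant because there are no branch values on the circle, and freeness of $\rho$ selects the sign for which $\rho\circ\sigma$ carries the real ovals.)

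Once $\rho\circ\sigma$ is known to have fixed points, the corollary follows: if moreover $f$ is simple, then all hypotheses of Theorem~\ref{CMCs3} are met and $f$ is a CMC torus in a $3$-dimensional space form. The only genuinely new ingredient is the parity identity $\rho^{2}(\lambda,y)=(\lambda,\varepsilon^{g+1}y)$, and I expect the main obstacle to be making this fully rigorous---specifically, verifying that reality of the $\bar\rho$-invariant branch divisor forces the lift into the displayed normal form with unimodular $c$, so that the sign $\varepsilon^{g+1}$ is correctly identified. Everything after that is either formal (the reduction to Theorem~\ref{CMCs3}) or a classical fact about real structures on hyperelliptic curves (the fibrewise swap/fix dichotomy over the fixed circle).
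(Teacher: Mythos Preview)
Your proposal is correct and follows essentially the same route as the paper: both hinge on the parity identity showing that a lift of $\lambda\mapsto\varepsilon\bar\lambda^{-1}$ to the hyperelliptic curve squares to $(\lambda,y)\mapsto(\lambda,\varepsilon^{\,g+1}y)$, so that $\varepsilon=-1$ is impossible for even $g$. The only cosmetic difference is packaging---the paper argues by contradiction (no fixed points of $\rho\sigma$ would force $\varepsilon=-1$), whereas you first pin down $\varepsilon=+1$ and then read off the fixed points of $\rho\sigma$ over $|\lambda|=1$ from the freeness of $\rho$; the content is the same.
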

\begin{proof}
Let $\Sigma$  be a hyperelliptic  curve with  two marked points $0$ and $\infty$ and the two involutions $\sigma$  and $\rho.$ Suppose $\rho \circ \sigma$ has no fixed points.  Then $\rho$ induces a fixpoint free  involution  on $\C P^1$ interchanging the points $0$ and $\infty$. 
Let $\lambda$ be the holomorphic coordinate on $\C P^1$ such that the point $0\in \Sigma$ lies over $\lambda = 0$ and  the point $\infty \in \Sigma$  lies over $\lambda = \infty.$
Then $\rho$ on $\Sigma/\sigma \cong \C P^1$ is given by
$$\lambda \mapsto -\bar \lambda^{-1}$$
In this coordinate $\Sigma$ is given by
$$\eta^2 = \Pi_{i=1}^{g+1}\bar q_i^{-1}(\lambda - q_i)(\lambda +\bar q_i^{-1}) =: P(\lambda),$$
where $q_i, - \bar q_i^{-1} \in \C$ and $0, \infty$ are the branch points of $\Sigma.$
It is easy to compute that 
$$P(- \bar\lambda^{-1}) = (-1)^{g+1}\bar\lambda^{-(2g + 2)}\overline{P(\lambda)}.$$
Therefore the  map $\rho$  on $\Sigma$ inducing the involution $\lambda \mapsto - \lambda^{-1}$  is given by
$$(\eta ,\lambda)\mapsto (\pm i^{g+1} \bar \eta \bar \lambda^{-(g+1)}, -\bar \lambda^{-1} ).$$
Since $g$ is even this cannot define an involution on $\Sigma$.  \end{proof}

\begin{Pro}\label{g=1}
A simple immersion $f: T^2 \rightarrow S^3$ (not necessarily constrained Willmore) of spectral genus $1$ is equivariant, i.e., it has a $1-$parameter group of M\"obius symmetries. If $\rho \circ \sigma$ has fixed points then it is CMC torus in a space form.
\end{Pro}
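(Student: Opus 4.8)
The plan is to deduce equivariance from a dimension count in $Jac(\Sigma)$ together with the simplicity hypothesis, and then to obtain the CMC statement by verifying the hypotheses of Theorem \ref{CMCs3}. Throughout I work with the $T^2$-family of holomorphic line bundles $\tilde{\mathcal L}_x \subset \Sigma \times \C^4$ produced by the reconstruction of Theorem \ref{Immersionreconstruction}, and with the group homomorphism
$$\Psi : T^2 \to Jac(\Sigma), \quad x \mapsto \tilde{\mathcal L}_x\,\tilde{\mathcal L}_{x_0}^{-1},$$
for a fixed $x_0 \in T^2$ (a homomorphism by \cite{BLPP}). Since the spectral genus is $1$, $Jac(\Sigma)$ is a one-dimensional complex torus, hence of real dimension $2$. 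The reconstruction data carry the reality condition $\rho^*\tilde{\mathcal L}_x \cong \tilde{\mathcal L}_x\,\jj$ imposed by the quaternionic structure $\jj$ used in the twistor projection (cf. the hypotheses of Theorem \ref{hitchin}). Applying this to $\Psi(x)$ shows that the class $\Psi(x)$ is fixed by the real structure that $\rho$ induces on $Jac(\Sigma)$. As the fixed locus of an anti-holomorphic involution on a $g$-dimensional complex torus is a real subtorus of dimension $g$, for $g=1$ the image $Z = \Psi(T^2)$ lies in a real one-dimensional subtorus, so $\dim_\R Z \le 1$.

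Since $\Psi : T^2 \to Z$ is a surjective homomorphism of real tori with $\dim_\R T^2 = 2$ and $\dim_\R Z \le 1$, its kernel has dimension at least $1$; let $\{x_t\}_{t\in\R}$ be a one-parameter subgroup contained in $\ker\Psi$. Because $\Psi$ is a homomorphism, $\Psi(x+x_t)=\Psi(x)$ for every $x$, so the translated family $x \mapsto \tilde{\mathcal L}_{x+x_t}$ is isomorphic to $x \mapsto \tilde{\mathcal L}_x$; in particular it produces the same spectral curve $\Sigma$ and the same $Z$. This is where simplicity enters: the reconstruction map $F$ is unique up to the subgroup of $PSL(4,\C)$ compatible with $\jj$, and $F_t(x,\cdot):=F(x+x_t,\cdot)$ is a second reconstruction of the same bundle data. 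Hence $F_t = M_t\cdot F$ for some such $M_t$, which acts on $\HP^1$ as a M\"obius transformation, and therefore $f(x+x_t)=M_t f(x)$ for all $x$. The map $t\mapsto M_t$ is a one-parameter group of M\"obius symmetries, so $f$ is equivariant.

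For the second assertion I first record that $\Sigma/\sigma \cong \CP^1$. Indeed $\sigma:(\eta,\mu)\mapsto(\eta^{-1},\mu)$ fixes the two marked points $0$ and $\infty$, so it is an involution of the genus-one curve $\Sigma$ with a fixed point, and Riemann--Hurwitz $\,0 = 2(2-2g') - r\,$ (with $r>0$ the number of fixed points) forces the quotient genus $g'=0$; thus $\Sigma/\sigma \cong \CP^1$. Together with the standing assumption that $\rho\circ\sigma$ has fixed points, all hypotheses of Theorem \ref{CMCs3} are then satisfied, and I conclude that $f$ is CMC in a space form, the fixed-point behaviour of $\rho\circ\sigma$ selecting the space form exactly as in the three cases of that proof; for the reconstructed surface this is made explicit by the Sym--Bobenko formula of Theorem \ref{sym}. (If $f$ is not assumed constrained Willmore, the same conclusion follows because the equivariance just established reduces the problem to the classification of equivariant tori in \cite{H1}.) The main obstacle is the first paragraph: one must check carefully that the reconstruction bundles genuinely satisfy the $\rho$-reality condition and that this confines $Z$ to the real $g$-dimensional subtorus, for it is precisely the collapse from real dimension $2$ to $\le 1$ in genus one that forces $\ker\Psi$ to be positive-dimensional and so produces the symmetry; the remaining steps are then either formal (the homomorphism and kernel argument) or a direct appeal to simplicity and to Theorems \ref{CMCs3} and \ref{hitchin}.
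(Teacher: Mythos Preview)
Your proof is correct and follows essentially the same route as the paper: the dimension count in $Jac(\Sigma)$ using the $\rho$-reality constraint to force a one-dimensional kernel of $\Psi$, then simplicity to produce the M\"obius symmetry, and finally Riemann--Hurwitz on the involution $\sigma$ (fixing the two marked points) to get $\Sigma/\sigma\cong\CP^1$. The paper is terser---it simply asserts that ``the set of line bundles compatible with $\rho$ is only an $S^1$'' and cites \cite{B} for the branching of $\sigma$ over $0,\infty$---but the content is the same; your parenthetical invocation of \cite{H1} for the non-constrained-Willmore case is also how the paper handles this in the introduction.
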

\begin{proof}
By assumption the map $F$ is unique. Thus the bundle $L = \pi_{\H}F(-, \infty)$ is unique up to  M\"obius transformations of $S^4.$ If we only consider immersions into a fixed $S^3 \subset S^4,$ we obtain that this reconstruction is unique up to M\"obius transformations of $S^3.$
Since the Jacobian of a torus is the torus itself and the set of line bundles compatible with $\rho$  is only a $S^1,$ the map $\Psi$ has a $1-$dimensional kernel.  
Now let $x$ be the direction in $T^2 = \C /\Gamma$ parametrizing the kernel of $\Psi$. Then the conformal maps $f(x,y)$ and $f(x+ x_0, y)$ have  the same spectral curve $\Sigma$ and $Z.$  Thus there is a M\"obius transformation $M_{x_0}$ of $S^3$ with 
$$f(x + x_0,y) = M_{x_0}f(x,y).$$ 
The map $M: \R \rightarrow$ M\"ob$(S^3)$ is a group homomorphism, thus $f$ is equivariant.\\

The involution $\sigma$ is branched over the two ends of the spectral curve by \cite{B}. Thus by the Riemann-Hurwitz formula we obtain
that there must be two other branch points and $\Sigma/\sigma \cong \C P^1.$ 
\end{proof}
\begin{Rem}
If $\rho \circ \sigma$ has no fixed points, then $f$ is given by the rotation of an elastic figure eight curve in the hyperbolic plane, modeled by the upper half plane, around the $x-$axis. 
\end{Rem}
\begin{Cor}
A simple constrained Willmore immersion in $S^3$ of spectral genus $2$ is either equivariant or CMC in a space form.
\end{Cor}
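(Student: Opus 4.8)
The plan is to split into two cases according to the genus of the quotient $\Sigma/\sigma$ and to dispatch each case with the machinery already in place. Since $\Sigma$ has genus $2$ and $\sigma$ is the holomorphic involution on the spectral curve, I would first apply the Riemann--Hurwitz formula to the double cover $\Sigma \to \Sigma/\sigma$. Writing $g'$ for the genus of the quotient and $b$ for the number of fixed points of $\sigma$, this gives $2 - 4 = 2(2 - 2g') - b$, i.e. $b = 6 - 4g'$, so that necessarily $g' \in \{0,1\}$: either $\Sigma/\sigma \cong \CP^1$ (with $\sigma$ having six fixed points) or $\Sigma/\sigma$ is an elliptic curve $E$ (with $\sigma$ having two fixed points).

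In the first case $\Sigma/\sigma \cong \CP^1$ I would simply invoke the preceding corollary: since the spectral genus $g = 2$ is even, $\rho \circ \sigma$ automatically has fixed points. Theorem \ref{CMCs3} then applies directly and yields that $f$ is a CMC torus in a space form.

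The second case, where $\Sigma/\sigma$ is an elliptic curve $E$, is where equivariance must be established, and this is the main point. Here I would argue as in Proposition \ref{g=1} by producing a nontrivial kernel for $\Psi$. The key structural input is Lemma \ref{omega}: the bundle $\mathcal L_x \otimes \sigma^*\mathcal L_x$ is independent of $x$. Writing $\ell_x = \mathcal L_x \mathcal L_{x_0}^{-1} \in Jac(\Sigma)$, this forces $\sigma^*\ell_x = \ell_x^{-1}$, so that the image $\Psi(T^2)$ lands in the anti-invariant part of $\sigma^*$, namely the Prym variety of $\Sigma \to E$. Since $g(\Sigma) = 2$ and $g(E) = 1$, this Prym variety has complex dimension $1$.

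Finally, exactly as in Proposition \ref{g=1}, the reality condition $\rho^*\mathcal L_x = \mathcal L_x \jj$ restricts $\ell_x$ to the $\rho$-compatible locus, which inside a one-dimensional complex torus is a real one-dimensional subtorus, i.e. a union of circles. Thus $\Psi(T^2)$ is contained in a real one-dimensional subgroup of the Prym, and since $T^2$ has real dimension $2$, the homomorphism $\Psi$ must have a nontrivial kernel. Translating in the kernel direction then produces, just as in Proposition \ref{g=1}, a one-parameter group of M\"obius symmetries of $f$, so that $f$ is equivariant. I expect the main obstacle to be the careful bookkeeping of the Prym decomposition together with the reality constraint imposed by $\rho$ and $\jj$; in particular one must verify that the compatible locus inside the Prym is genuinely only one real dimensional, so that the kernel of $\Psi$ is indeed forced to be nontrivial.
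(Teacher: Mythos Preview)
Your proposal is correct and follows essentially the same approach as the paper's own proof: the same Riemann--Hurwitz case split into $\Sigma/\sigma \cong \CP^1$ versus $\Sigma/\sigma$ a torus, the same invocation of the even-genus corollary and Theorem \ref{CMCs3} in the first case, and the same use of Lemma \ref{omega} to force the image of $\Psi$ into the (complex one-dimensional) Prym variety in the second case, followed by the reality constraint from $\rho$ and the equivariance argument of Proposition \ref{g=1}. The paper's proof is slightly terser but the logical structure is identical.
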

\begin{proof}
By Riemann-Hurwitz formula the involution $\sigma$ has either $2$ or $6$ branch points. 
If $\sigma$ has $6$ branch points then $\Sigma/\sigma$ is $\C P^1.$ Because the genus is even, the involution $\rho \circ \sigma$ has fixed points. Therefore we get a CMC torus in a space form.\\

If $\sigma$ has $2$ branch points, then $\Sigma/\sigma$ is a torus. Let $x_0, x \in T^2$. We have
$$\mathcal L_x = E_x \otimes \mathcal L_{x_0}.$$
Since the bundle  $\mathcal L^*_x \otimes \sigma^*\mathcal L^*_x$ is independent of $x$ by Lemma \ref{omega}, we obtain
 
$$E_x \otimes \sigma^* E_x = \underline \C.$$
Thus $\mathcal L_x$ lies  in a affine translate of the 
Prym variety of $\Sigma$ with respect to $\sigma$ for all $x \in T^2 $. Since the Prym variety is complex $1$ dimensional and $\rho^*\mathcal L_x = \mathcal L_x \jj$, the image of the map
$\Psi $
is real $1-$dimensional and has a $1-$dimensional kernel. By the same arguments as in Proposition \ref{g=1} we obtain that $f$ is equivariant.
\end{proof}



\begin{thebibliography}{10}
\bibitem{A} Aleksandrov, A. {\em Uniqueness theorems for surfaces in the large. I}. Vestnik Leningrad. Univ. 11, no. 19 (1956), pp 5 - 17 (Russian).

\bibitem{BaBob} Babich and  Bobenko {\em Willmore tori with umbilic lines and minimal surfaces in hyperbolic space}. Duke Math. J. Volume 72, Number 1 (1993), pp 151 - 185.

\bibitem{Bob1} Bobenko, A. {\em All constant mean curvature tori in $\R^3, S^3, H^3 $ in terms of theta-functions}. Math. Annalen, Volume 290, 1 (1991), pp 209 - 245.


\bibitem{Bob2} Bobenko, A. {\em Constant mean curvature surfaces and integrable equations}.
 Uspekhi Mat. Nauk 46:4, pp 3 - 42, 1991, [English transl.: Russian Math. Surveys 46:4 (1991), pp 1 - 45.] 
\bibitem{B} Bohle, Ch. {\em Constrained Willmore Tori in the $4-$Sphere}.
  J. Diff. Geom., 86 (2010), pp 71 - 132.

\bibitem{BLPP} Bohle, Leschke, Pedit and Pinkall. {\em Conformal Maps from a 2-Torus to the $4-$Sphere}.
J. Reine Angew. Math., 671 (2012), pp 1 - 30.

\bibitem{BoPP} Bohle, Pedit and Pinkall. {\em The Spectral Curve of a Quaternionic Holomorphic Line Bundle over a 2-Torus}.
Manuscr. Math. 130 (2009), pp 311 - 352. 

\bibitem{BoPetP} Bohle, Peters and Pinkall. {\em Constrained Willmore Surfaces}.
Calc. Var. Partial Differential Equations 32 (2008), pp 263 - 277.

\bibitem{BuPP} Burstall, Pedit and Pinkall. {\em Schwarzian Derivatives and Flows of Surfaces}.
Contemp. Math., 308 (2002), pp 39 - 61.

\bibitem{BuFLPP} Burstall, Ferus, Leschke, Pedit, and Pinkall. {\em Conformal Geometry of Surfaces in $S^4$ and Quaternions}. Lecture Notes in Mathematics 1772 (2002),
  Springer-Verlag, Berlin.
    

\bibitem{FLPP}  Ferus, Leschke, Pedit, and Pinkall. {\em Quaternionic Holomorphic Geometry: Pl\"ucker Formula, Dirac Eigenvalue Estimates and Energy Estimates of Harmonic 2-Tori}. Invent.  Math., 146.3 (2001), pp 507 - 593.

\bibitem{H}Heller, L. {\em Equivariant constrained Willmore tori in $S^{3}$}. Thesis, Eberhard Karls Universit\"at T\"ubingen, 2012.

\bibitem{H1} Heller, L. {\em Equivariant Constrained Willmore Tori in the $3-$sphere}.
arXiv:1211.4137 (2012).


\bibitem{Hi} Hitchin, N. {\em Harmonic maps from a $2-$torus to the $3-$sphere}.
J. Differential Geom. Volume 31, Number 3 (1990), pp 627 - 710.


\bibitem{KS} Kuwert and  Sch\"atzle. {\em Minimizers of the Willmore functional under fixed conformal class}.
to appear in  J. Diff. Geom.


\bibitem{PS} Pinkall and Sterling. {\em On the classification of constant mean curvature tori}. 
Ann. of Math., 130 (1989), pp 407 - 451.

 
 \bibitem{R} Richter, J. {\em Conformal Maps of a Riemannian Surface onto the Space of Quaternions}. Thesis, Technische Universit\"at Berlin (1997).


\bibitem{We} Weiner, J. {\em On the Problem of Chen, Willmore, et al.} Indiana Univ. Math. Jour.,  27 (1978), pp 19 - 35.

\bibitem{W} Wente, H.  {\em Counterexample to a conjecture of H. Hopf}. Pac. J. Math. 121 (1986), pp 193 - 243.






\end{thebibliography}
\end{document}